\documentclass[11pt]{amsart}

\usepackage{hyperref}
\usepackage[centertags]{amsmath}
\usepackage{amsfonts}
\usepackage{amssymb}
\usepackage{amsthm}
\usepackage{amsmath}
\usepackage{booktabs}
\usepackage{cite}
\usepackage{comment}
\usepackage{dsfont}
\usepackage{float}
\usepackage{subfigure}
\usepackage[shortlabels]{enumitem}
\usepackage{etoolbox}
\usepackage{needspace}
\usepackage{xcolor}
\usepackage{tikz}
\usetikzlibrary{matrix,arrows}

\theoremstyle{plain}
\newtheorem{thm}{Theorem}[section]
\newtheorem{cor}[thm]{Corollary}
\newtheorem{lem}[thm]{Lemma}
\newtheorem{prop}[thm]{Proposition}

\theoremstyle{definition}

\AtBeginEnvironment{lem}{\Needspace{3\baselineskip}}
\AtBeginEnvironment{thm}{\Needspace{3\baselineskip}}
\AtBeginEnvironment{prop}{\Needspace{3\baselineskip}}
\AtBeginEnvironment{cor}{\Needspace{3\baselineskip}}

\setlist[enumerate,1]{leftmargin=2.2em}

\typeout{Substyle for letter-sized documents. Released 24 July 1992}


\setlength{\topmargin}{-1in}
\setlength{\headheight}{1.5cm}
\setlength{\headsep}{0.3cm}
\setlength{\textheight}{9in}
\setlength{\oddsidemargin}{0cm}
\setlength{\evensidemargin}{0cm}
\setlength{\textwidth}{6.5in}

\def\N{\mathbb N}

\def\D{\mathcal D}
\def\S{\mathfrak S}
\def\Z{\mathbb Z}
\def\F{\mathbb F}

\def\sl_2{\mathfrak{sl}_2}
\def\U{U_q(\mathfrak{sl}_2)}
\def\V{U_q'(\mathfrak{so}_3)}
\def\e{\varepsilon}

\def\AW{{\rm AW}}
\def\T{\mathbf T}
\def\M{\mathbf M}

\title[Irreducible modules of Askey-Wilson algebras]{Finite-dimensional irreducible modules of the universal Askey-Wilson algebra}

\author{Hau-Wen Huang}
\address{
Einstein Institute of Mathematics\\
Hebrew University\\
Jerusalem 91904 Israel
}
\email{hauwenh@math.huji.ac.il}

\thanks{The research was supported by National Center for Theoretical Sciences of Taiwan and the Council for Higher Education of Israel.
}

\begin{document}








\maketitle

\begin{abstract}
Since the introduction of Askey-Wilson algebras by Zhedanov in 1991, the classification of the finite-dimensional irreducible modules of Askey-Wilson algebras remains open. A universal analog $\triangle_q$ of the Askey-Wilson algebras was recently studied.
In this paper, we consider a family of infinite-dimensional $\triangle_q$-modules. By the universal property of these $\triangle_q$-modules, we classify the finite-dimensional irreducible $\triangle_q$-modules when $q$ is not a root of unity.
\end{abstract}

{\footnotesize{\bf Keywords:} Askey-Wilson algebras, Leonard pairs, quantum groups, Verma modules.}

{\footnotesize {\bf 2010 MSC Primary:} 33C45, 33D80;
{\bf Secondary:} 17B37}.


\section{Introduction}\label{s:intro}

In his pioneering work of 1991, Zhedanov introduced the Askey-Wilson algebras \cite{hidden_sym} motivated by the Racah coefficients of $\mathfrak{su}_q(2)$
\cite{gz92} and the hidden relations between the Askey-Wilson operator and the three-term recurrence relation of the Askey-Wilson polynomials \cite{awpoly}. These algebras are associative unital algebras over the complex number field involving a nonzero scalar $q$ and five parameters $\varrho,$ $\varrho^*,$ $\eta,$ $\eta^*$, $\omega$. Given these data, the Askey-Wilson algebra $\AW_q$ is defined by generators $K_0$, $K_1$, $K_2$ subject to the following relations:
\begin{eqnarray*}
qK_1K_2-q^{-1}K_2K_1&=&\omega K_1+\varrho K_0+\eta^*,\\
qK_2K_0-q^{-1}K_0K_2&=&\omega K_0+\varrho^* K_1+\eta,\\
qK_0K_1-q^{-1}K_1K_0&=&K_2.
\end{eqnarray*}
Let us abbreviate $\AW=\AW_q$.
For example, the quantum group $\V$ \cite{fair1990,hkp1999,hp2001,hp2011,odes1986, nrz:1990,nr:1993,ik2001,nuw:1996}
different from the Drinfeld-Jimbo type is the algebra $\AW$ with $\varrho=1$, $\varrho^*=1$, $\eta=0$, $\eta^*=0$, $\omega=0$ and the Bannai-Ito algebra \cite{gvz2013,gvz2014,gvz2015,tvz2012} is the limit case $q\to -1$ of $\AW$.
Over two decades of research, the Askey-Wilson algebras have been found applications to
 the quantum integrable systems \cite{base2005,BDK:1994,glz92},
the Drinfeld-Jimbo quantum group $\U$  \cite{cm2013,gz93,uaw&equit2011}, the double affine Hecke algebras of rank one \cite{Z3&daha1,aw&daha1,aw&daha2}, the sixth Painlev\'{e} equation \cite{maz2013}, the discrete quantum mechanics \cite{dqm10,dqm11} and so on. In this paper we study the representation theory of the Askey-Wilson algebras.

The first family of finite-dimensional $\AW$-modules was constructed in \cite[Section~2]{hidden_sym}. On these $\AW$-modules $K_0$, $K_1$ act like Leonard pairs. Roughly speaking, the Leonard pair is a pair of diagonalizable linear transformations on a nonzero finite-dimensional vector space,
each of which acts in an irreducible tridiagonal fashion on
an eigenbasis of the other one \cite[Definition~1.1]{lp2001}. According to their corresponding orthogonal polynomials, Leonard pairs were classified into the $q$-Racah, Racah and related types  \cite[Section~35]{ter2006}. The works \cite{lp&awrelation,vid07} of Terwilliger and Vidunas gave a more comprehensive description of how Leonard pairs are related to $\AW$-modules: 
Given a Leonard pair of $q$-Racah or other $q$-types, the underlying vector space supports an irreducible $\AW$-module with appropriate parameters $\varrho$, $\varrho^*$, $\eta$, $\eta^*$, $\omega$ on which $K_0$, $K_1$ act as affine transformations of the Leonard pair. Conversely, assume that $V$ is a finite-dimensional irreducible $\AW$-module on which each of $K_0$, $K_1$ is diagonalizable with all eigenspaces of dimension one. Then $K_0,$ $K_1$ act on $V$ as a Leonard pair of $q$-type, provided that $q$ is not a root of unity. 

The notion of Leonard pairs was extended to so-called Leonard triples by Curtin \cite[Definition~2.1]{cur2007}. The finite-dimensional irreducible $\V$-modules for $q$ not a root of unity were classified by Havl\'i\v{c}ek and Po\v{s}ta \cite[Theorem~4]{hp2001}. Based on their results, it can be shown the action of $K_0$, $K_1$, $K_2$ on each of these irreducible $\V$-modules as a Leonard triple. The irreducible $\V$-modules at $q$ a root of unity were proved to be finite-dimensional and studied deeply in \cite[Sections~5--7]{hp2001}.
However the problem of classifying all finite-dimensional irreducible $\AW$-modules with arbitrary parameters is still open. In a recent paper \cite{uaw2011} of Terwilliger, the universal Askey-Wilson algebra $\triangle=\triangle_q$ with $q^4\not=1$ was introduced.
The algebra $\triangle$ is generated by $A$, $B$, $C$ subject to the relations which assert that each of
\begin{eqnarray}\label{eq:comlist}
 A+ \frac{qBC-q^{-1}CB}{q^2-q^{-2}},
\qquad \quad
B+
\frac{qCA-q^{-1}AC}{q^2-q^{-2}},
\qquad \quad
C+
\frac{qAB-q^{-1}BA}{q^2-q^{-2}}
\end{eqnarray}
commutes with $A$, $B$, $C$.
This algebra $\triangle$ is obtained from $\AW$ by the following two-step procedure. First, the algebra is renormalized by a mild change of generators. Second, the remaining parameters are interpreted as central elements in the algebra.

The purpose of this paper is to classify the finite-dimensional irreducible $\triangle$-modules for $q$ not a root of unity. We begin with an infinite-dimensional $\triangle$-module $M_\lambda(a,b,c)$ with four nonzero parameters $a$, $b$, $c$, $\lambda$ which is regarded as the Verma $\triangle$-module due to its significant universal property. Fix an integer $n\geq 0$. An $(n+1)$-dimensional $\triangle$-module $V_n(a,b,c)$ is explicitly constructed by taking a quotient of $M_\lambda(a,b,c)$ with $\lambda=q^n$. The irreducibility criterion for $V_n(a,b,c)$ can be simply characterized as
\begin{gather*}
abc,\, a^{-1}bc,\, ab^{-1}c,\, abc^{-1}\notin\{q^{1-n},q^{3-n},\ldots,q^{n-1}\}.
\end{gather*}
Consider the set $\T$ consisting of all such triples $(a,b,c)$. There is an action of the group $\{\pm1\}^3$ on $\T$ given by
\begin{equation*}
(a,b,c)^{(-1,1,1)}=(a^{-1},b,c), \qquad \quad
(a,b,c)^{(1,-1,1)}=(a,b^{-1},c), \qquad \quad
(a,b,c)^{(1,1,-1)}=(a,b,c^{-1})
\end{equation*}
for all $(a,b,c)\in \T$. Let $\T/\{\pm 1\}^3$ denote the set of all $\{\pm1\}^3$-orbits of $\T$. For $(a,b,c)\in \T$ let $[a,b,c]$ denote the $\{\pm 1\}^3$-orbit of $\T$ that contains $(a,b,c)$. Define $\M$ to be the set of the isomorphism classes of irreducible $\triangle$-modules that have dimension $n+1$. By the universal property of Verma $\triangle$-modules we establish a bijection $\T/\{\pm1\}^3\to \M$ given by
\begin{equation*}
[a,b,c] \quad \mapsto \quad \hbox{the isomorphism class of $V_n(a,b,c)$} \qquad \quad \hbox{for all $[a,b,c]\in \T/\{\pm1\}^3$}.
\end{equation*}
This result gives a classification of the finite-dimensional irreducible $\triangle$-modules when $q$ is not a root of unity.

Besides, we characterize on which $\triangle$-modules $A$, $B$, $C$ give Leonard pairs or a Leonard triple and formulate the sufficient conditions for $\triangle$-modules to be unitary.
Apply our classification to $\V$ and compare the result with \cite[Theorem~4]{hp2001}. Determine how many $\U$-modules on $V_n(a,b,c)$ give the $\triangle$-module $V_n(a,b,c)$ by pulling back via the homomorphism $\triangle\to \U$ given below \cite[Proposition~1.1]{uaw&equit2011}. Close the paper with an illustration how the Racah coefficients of $\U$ are related to the $\triangle$-modules.

\section{Notation and preliminaries}\label{s:pre}

Before launching into the subject we lay some groundwork in preparation. Because our arguments are valid for any algebraically closed field $\F$, we change the underlying field from the complex number field to $\F$. Let $\Lambda$, $Q$, $X$, $Y$, $Z$ denote five mutually commuting indeterminates over $\F$.  Let $\Z$ denote the ring of integers. Let $\N$ denote the set of the nonnegative integers and $\N^*=\N\setminus\{0\}$. Define
\begin{eqnarray*}
\theta_i(\Lambda,Q;X) &=&\Lambda Q^{-2i} X^{-1}+\Lambda^{-1}Q^{2i}X
\qquad \quad \hbox{for $i\in \Z$},\\
\phi_i(\Lambda,Q;X,Y,Z) &=&
\Lambda Q X^{-1}Y^{-1} (Q^i-Q^{-i})
(\Lambda^{-1}Q^{i-1}-\Lambda Q^{1-i}) \\
&& \quad \times\;\; (Q^{-i}-\Lambda^{-1}Q^{i-1}XYZ)
(Q^{-i}-\Lambda^{-1}Q^{i-1}XYZ^{-1})
\qquad \quad \hbox{for $i\in \Z$},\\
\omega(\Lambda,Q;X,Y,Z) &=&
(\Lambda Q+ \Lambda^{-1} Q^{-1})(Z+Z^{-1})
+(X+X^{-1})(Y+Y^{-1}).
\end{eqnarray*}
Observe that
\begin{eqnarray}
\phi_i(\Lambda,Q;X,Y,Z) &\in& \F[\Lambda,Q,X,Y,Z+Z^{-1}] \qquad \quad \hbox{for $i\in \Z$}, \label{e:phi}\\
\omega(\Lambda,Q;X,Y,Z) &\in& \F[\Lambda Q+ \Lambda^{-1} Q^{-1},X+X^{-1},Y+Y^{-1},Z+Z^{-1}] \label{e:omega}
\end{eqnarray}
and
\begin{eqnarray}
\theta_i(\Lambda^{-1},Q^{-1};X^{-1}) &=& \theta_i(\Lambda,Q;X) \qquad \quad \hbox{for $i\in \Z$},\label{e:theta-1}\\
\phi_i(\Lambda^{-1},Q^{-1};X^{-1},Y^{-1},Z^{-1})
&=& \phi_i(\Lambda,Q;X,Y,Z) \qquad \quad \hbox{for $i\in \Z$},
\label{e:phi-1}\\
\omega(\Lambda,Q;Y,X,Z) &=&\omega(\Lambda,Q;X,Y,Z). \label{e:omega-2}
\end{eqnarray}

Fix $n\in \N$ throughout this paper. Let
$$
\{\theta_i(Q;X)\}_{i\in \Z}, \qquad \quad
\{\phi_i(Q;X,Y,Z)\}_{i\in \Z}, \qquad \quad
\omega(Q;X,Y,Z)
$$
denote the Laurent polynomials $\{\theta_i(\Lambda,Q;X)\}_{i\in \Z}$, $\{\phi_i(\Lambda,Q;X,Y,Z)\}_{i\in \Z}$, $\omega(\Lambda,Q;X,Y,Z)$ with the substitution $\Lambda=Q^n$, respectively. Observe that
\begin{eqnarray}
\theta_{n-i}(Q;X)&=&\theta_i(Q^{-1};X) \qquad \quad \hbox{for $i\in \Z$},\label{e:theta-2}\\
\phi_{n-i+1}(Q;X,Y,Z) &=&\phi_i(Q^{-1};X,Y,Z)
\qquad \quad \hbox{for $i\in \Z$}. \label{e:phi-2}
\end{eqnarray}
Note that (\ref{e:phi})--(\ref{e:phi-2}) will be used without further mention.

\subsection{Three $\N\times \N$ matrices}

A (possibly infinite) square matrix is said to be {\it tridiagonal} if each nonzero entry lies on either the diagonal, the subdiagonal, or the superdiagonal. A square matrix is said to be {\it lower} (resp.\! {\it upper}) {\it bidiagonal} if each nonzero entry lies on the diagonal or subdiagonal (resp. superdiagonal).

The $\N\times \N$ matrices $L(\Lambda,Q;X)$, $U(\Lambda,Q;X,Y,Z)$, $T(\Lambda,Q;X,Y,Z)$ given below will be used to define the Verma $\triangle$-modules. The matrix $L(\Lambda,Q;X)$ is lower bidiagonal with
\begin{eqnarray*}
L(\Lambda,Q;X)_{ii} &=& \theta_i(\Lambda,Q;X) \qquad \quad \hbox{for $i\in \N$},\\
L(\Lambda,Q;X)_{i,i-1} &=& 1 \qquad \quad \hbox{for $i\in \N^*$}.
\end{eqnarray*}
The matrix $U(\Lambda,Q;X,Y,Z)$ is upper bidiagonal with
\begin{eqnarray*}
U(\Lambda,Q;X,Y,Z)_{ii} &=& \theta_i(\Lambda,Q;Y) \qquad \quad \hbox{for $i\in \N$},\\
U(\Lambda,Q;X,Y,Z)_{i-1,i} &=& \phi_i(\Lambda,Q;X,Y,Z)  \qquad \quad \hbox{for $i\in \N^*$}.
\end{eqnarray*}
The matrix $T(\Lambda,Q;X,Y,Z)$ is tridiagonal with
\begin{eqnarray*}
T(\Lambda,Q;X,Y,Z)_{ii} &=&
\frac{Q^{-1}\phi_{i+1}(\Lambda,Q;X,Y,Z)-Q\,\phi_i(\Lambda,Q;X,Y,Z)}{Q^2-Q^{-2}}
\\
&&\quad+\;\;
\frac{\omega(\Lambda,Q;X,Y,Z)-\theta_i(\Lambda,Q;X) \theta_i(\Lambda,Q;Y)}{Q+Q^{-1}}
\qquad \quad \hbox{for $i\in \N$},\\
T(\Lambda,Q;X,Y,Z)_{i-1,i} &=& \frac{Q^{-1}\theta_i(\Lambda,Q;X)-Q\,\theta_{i-1}(\Lambda,Q;X)}{Q^{2}-Q^{-2}}\,\phi_i(\Lambda,Q;X,Y,Z)  \qquad \quad \hbox{for $i\in \N^*$},\\
T(\Lambda,Q;X,Y,Z)_{i,i-1} &=& \frac{Q^{-1}\theta_i(\Lambda,Q;Y)-Q\,\theta_{i-1}(\Lambda,Q;Y)}{Q^2-Q^{-2}} \qquad \quad \hbox{for $i\in \N^*$}.
\end{eqnarray*}

The $\N\times \N$ matrices $E(\Lambda,Q;X)$, $S(\Lambda,Q;X,Y,Z)$ from two matrix equations will also be present in this paper. To solve the two equations we transform either of them into a recurrence relation. After straightforward observations and tedious verifications, the solutions are available below.
Recall the notation
$$
[i]_Q =\frac{Q^{\,i}-Q^{-i}}{Q-Q^{-1}} \qquad \quad \hbox{for $i\in \Z$}
$$
and the Gaussian binomial coefficients
$$
{j\brack i}_{\! Q}=\prod_{h=1}^i\frac{[j-h+1]_Q}{[h]_Q}
\qquad \quad\hbox{for $i\in \N$ and $j\in \Z$}.
$$

\begin{lem}\label{lem:recrel1}
\begin{enumerate}
\item  The upper triangular $\N\times \N$ matrix $E(\Lambda,Q;X)$ with $(i,j)$-entry
\begin{gather*}
{j\brack i}_{\!Q}\prod_{h=1}^{j-i}
(\Lambda^{-1}Q^{h+i-1}-\Lambda Q^{1-i-h})
(Q^{1-h}X-Q^{h-1}X^{-1})
\end{gather*}
for all $i,j\in \N$ with $i\leq j$ is the unique
matrix $E$ satisfying
$$
L(\Lambda,Q;X)\cdot E=E\cdot L(\Lambda,Q;X^{-1})
$$
with $E_{00} = 1$ and $E_{i0} = 0$ for all $i\in \N^*$.

\item The upper triangular $\N\times \N$ matrix $S(\Lambda,Q;X,Y,Z)$ with $(i,j)$-entry
\begin{gather*}
(-1)^i
\Lambda^i
Q^{-ij} Y^{-j}
{j \brack i}_{\!Q}
\displaystyle\prod_{h=1}^{j-i}
(\Lambda Q^{h-j} - \Lambda^{-1} Q^{j-h})
(Q^{j-h}Y Z- \Lambda Q^{h-j-1}X^{-1})
\end{gather*}
for all $i,j\in \N$ with $i\leq j$ is the unique matrix $S$ satisfying
$$
T(\Lambda,Q;X,Y,Z)\cdot S=S\cdot L(\Lambda,Q;Z)
$$
with $S_{00} = 1$ and $S_{i0} = 0$ for all $i\in \N^*$.
\end{enumerate}
\end{lem}

While studying the finite-dimensional irreducible $\triangle$-modules, we will turn our attention to the submatrices
\begin{gather*}
L(Q;X), \qquad \quad
U(Q;X,Y,Z), \qquad \quad
T(Q;X,Y,Z)
\end{gather*}
of $L(\Lambda,Q;X)$, $U(\Lambda,Q;X,Y,Z)$, $T(\Lambda,Q;X,Y,Z)$ indexed by the first $n+1$ rows and $n+1$ columns with the substitution $\Lambda=Q^n$, respectively. Also, we will see the $(n+1)\times (n+1)$ matrices $E(Q;X)$, $S(Q;X,Y,Z)$ defined in the same way and the matrices $F(Q;X,Y,Z)$, $P(Q;X,Y,Z)$ solved from the following matrix equations.

\begin{lem}\label{lem:recrel2}
\begin{enumerate}
\item The lower triangular $(n+1)\times (n+1)$ matrix $F(Q;X,Y,Z)$ with $(i,j)$-entry
\begin{gather*}
{i\brack j}_{\!Q}
\prod_{h=1}^{i-j} (\theta_0(Q;Y)-\theta_{h-1}(Q^{-1};Y))
\prod_{h=1}^j
\frac{[n-i+h]_Q}{[n-h+1]_Q}\,
\phi_h(Q;X,Y,Z)
\prod_{h=1}^{n-i} \phi_h(Q;X^{-1},Y,Z)
\end{gather*}
for all $0\leq j\leq i\leq n$ is the unique matrix $F$ satisfying
$$
L(Q;X)\cdot F=F\cdot L(Q;X)
$$
with
$
F_{i0}=\prod\limits_{h=1}^{i}(\theta_0(Q;Y)-\theta_{h-1}(Q^{-1};Y))
\prod\limits_{h=1}^{n-i}\phi_h(Q;X^{-1},Y,Z)
$ for all $0\leq i\leq n$.

\item The upper left triangular $(n+1)\times (n+1)$ matrix $P(Q;X,Y,Z)$ with $(n-i,j)$-entry
\begin{gather*}
{i\brack j}_{\!Q}
\prod_{h=1}^{i-j} (\theta_0(Q;X)-\theta_{h-1}(Q^{-1};X))
\prod_{h=1}^j\frac{[n-i+h]_Q}{[n-h+1]_Q}\,\phi_h(Q;X,Y^{-1},Z)
\end{gather*}
for all $0\leq j\leq i\leq n$ is the unique
matrix $P$ satisfying
$$
U(Q;X,Y,Z)\cdot P=P\cdot L(Q^{-1};Y)
$$
with
$
P_{n-i,0}=\prod\limits_{h=1}^{i}(\theta_0(Q;X)-\theta_{h-1}(Q^{-1};X))$ for all $0\leq i\leq n$.
\end{enumerate}
\end{lem}

\subsection{The universal Askey-Wilson algebra}

Define $\alpha$, $\beta$, $\gamma$ to be the central elements of $\triangle$ obtained from multiplying the elements in (\ref{eq:comlist}) by $q+q^{-1}$, respectively. In terms of $A,$ $B,$ $\gamma$
\begin{eqnarray}
C &=& \frac{\gamma}{q+q^{-1}}-\frac{q A B-q^{-1} B A}{q^2-q^{-2}},
\label{C->ABc} \\
\alpha &=& \frac{B^2A-(q^2+q^{-2})BAB+AB^2+(q^2-q^{-2})^2A+(q-q^{-1})^2B\gamma}{(q-q^{-1})(q^2-q^{-2})}, \label{a->ABc}\\
\beta &=& \frac{A^2B-(q^2+q^{-2})ABA+BA^2+(q^2-q^{-2})^2B+(q-q^{-1})^2A\gamma}{(q-q^{-1})(q^2-q^{-2})}. \label{b->ABc}
\end{eqnarray}
By (\ref{C->ABc}) the elements $A$, $B$, $\gamma$ form a set of generators of $\triangle$. The Poincar\'{e}-Birkhoff-Witt theorem for $\triangle$ was proved in \cite[Theorem~4.1]{uaw2011}:

\begin{lem}\label{Delta:basis}
The monomials
$A^i C^j B^k \alpha^r \beta^s \gamma^t$ for all $i,j,k,r,s,t\in \N$ form a basis of $\triangle$.
\end{lem}

Recall that the symmetry group $\S_3$ of degree three has a presentation with generators $\sigma$, $\tau$ and relations
$\sigma^2=1$, $\tau^2=1$, $(\sigma\tau)^3=1$.
Define an action of $\S_3$ on the set of
all permutations $(R,S,T)$ of $A$, $B$, $C$ by
\begin{gather*}
(R,S,T)^\sigma=(T,S,R), \qquad \quad (R,S,T)^\tau=(S,R,T).
\end{gather*}
Similarly define an action of $\S_3$ on the set of all permutations $(r,s,t)$ of $\alpha$, $\beta$, $\gamma$ by
\begin{gather*}
(r,s,t)^\sigma=(t,s,r), \qquad \quad (r,s,t)^\tau=(s,r,t).
\end{gather*}
Let
$$
C^\vee=C+\frac{AB-BA}{q-q^{-1}}.
$$
As a consequence of \cite[Theorem~3.1]{uaw2011} we have

\begin{lem}\label{lem:auto}
For each $g\in \S_3$ there exists a unique automorphism $\widetilde g:\triangle\to \triangle$ that sends
\begin{gather*}
(A,B,C)^g
\quad \mapsto \quad
\left\{
\begin{array}{ll}
(A,B,C) \qquad \quad &\hbox{if the permutation $g$ is even},\\
(A,B,C^\vee) \qquad  \quad &\hbox{if the permutation $g$ is odd},
\end{array}
\right.
\end{gather*}
and sends $(\alpha,\beta,\gamma)^g
\;\mapsto \;
(\alpha,\beta,\gamma)$.
\end{lem}

\noindent Given a $\triangle$-module $V$ and $g\in \S_3$ the notation $V^g$ will stand for the $\triangle$-module obtained by pulling back $V$ via $\widetilde g$. 

\section{The Verma $\triangle$-module $M_\lambda(a,b,c)$}

Let $a$, $b$, $c$, $\lambda$ denote any four nonzero scalars in $\F$. There is a $\triangle$-module $M_\lambda(a,b,c)$ that has a basis $\{m_i\}_{i\in \N}$ with respect to which the matrices representing $A$, $B$, $C$ are
$$
L(\lambda,q;a), \qquad \quad
U(\lambda,q;a,b,c),\qquad \quad
T(\lambda,q;a,b,c)
$$
respectively. To confirm the existence of this module, one can check that the above matrices satisfy the defining relations for $\triangle$. The central elements $\alpha,$ $\beta,$ $\gamma$ act on $M_\lambda(a,b,c)$ as scalar multiplications by
$$
\omega(\lambda,q;b,c,a),
\qquad \quad
\omega(\lambda,q;c,a,b),
\qquad \quad
\omega(\lambda,q;a,b,c)
$$
respectively. The matrix representing $C^\vee$ with respect to $\{m_i\}_{i\in \N}$ is equal to
\begin{gather}\label{e:Cvee}
T(\lambda^{-1},q^{-1};a^{-1},b^{-1},c^{-1}).
\end{gather}

For a semisimple Lie algebra $\mathfrak g$ over an algebraically closed field of characteristic $0$, by applying the universal property of Verma $\mathfrak g$-modules, every finite-dimensional irreducible $\mathfrak g$-module $V$ is shown as a quotient of the Verma $\mathfrak g$-module with the same highest weight as $V$. The $\triangle$-module $M_\lambda(a,b,c)$ plays a role as the {\it Verma $\triangle$-module} from the above viewpoint. The universal property of $M_\lambda(a,b,c)$ is presented in Section~\ref{s:universal}. Applying the universal property of $M_\lambda(a,b,c)$, in the proof of Theorem~\ref{thm:class} we shall see that every finite-dimensional irreducible $\triangle$-module is a quotient of $M_\lambda(a,b,c)$ with appropriate $a$, $b$, $c$, $\lambda$ when $q$ is not a root of unity.

A connection between the Askey-Wilson polynomials and $M_\lambda(a,b,c)$ is displayed in Section~\ref{s:AW}. The four bases of $M_\lambda(a,b,c)$ mentioned in Section~\ref{s:4} are to prepare for Section~\ref{section:L(a,b,c)}.

\subsection{The universal property of $M_\lambda(a,b,c)$}\label{s:universal}

Let $I_\lambda(a,b,c)$ denote the left ideal of $\triangle$ generated by
\begin{gather}
B-\theta_0(\lambda,q;b), \label{I1}\\
(B-\theta_1(\lambda,q;b)) (A-\theta_0(\lambda,q;a))-
\phi_1(\lambda,q;a,b,c), \label{I2}\\
\alpha-\omega(\lambda,q;b,c,a),\qquad \beta-\omega(\lambda,q;c,a,b),\qquad \gamma-\omega(\lambda,q;a,b,c).  \label{I3}
\end{gather}

\begin{lem}\label{D/K:span}
The $\F$-vector space $\triangle/I_\lambda(a,b,c)$ is spanned by $A^i+I_\lambda(a,b,c)$ for all $i\in \N$.
\end{lem}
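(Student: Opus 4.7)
The plan is to prove the equivalent statement that $\mathcal{P}:=\mathrm{span}_\F\{B^i+\mathcal{I}_d(a,b,c):0\le i\le d\}$ is a left $\Delta$-submodule of $\Delta/\mathcal{I}_d(a,b,c)$. Since $\mathcal{P}$ contains the cyclic generator $1+\mathcal{I}_d(a,b,c)$, this will force $\mathcal{P}=\Delta/\mathcal{I}_d(a,b,c)$. As $\Delta$ is generated by $A,B,\gamma$, closure of $\mathcal{P}$ under left multiplication reduces to three checks. Closure under $B$ follows because $K_B(B)\in\mathcal{I}_d(a,b,c)$ makes $B^{d+1}$ congruent to a combination of lower powers of $B$. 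Closure under $\gamma$ follows from centrality together with $B^i(\gamma-\xi_\gamma)\in\mathcal{I}_d(a,b,c)$, yielding $\gamma B^i\equiv\xi_\gamma B^i$. The substance is closure under $A$.

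I would prove by induction on $i\ge 0$ that $AB^i$ is congruent modulo $\mathcal{I}_d(a,b,c)$ to a polynomial in $B$. The case $i=0$ is immediate from $A-\theta_0\in\mathcal{I}_d(a,b,c)$. For $i=1$, expanding the generator $(A-\theta_1)(B-\theta_d^*)-\phi_1\in\mathcal{I}_d(a,b,c)$ and absorbing $\theta_d^*A$ via $\theta_d^*(A-\theta_0)\in\mathcal{I}_d(a,b,c)$ gives $AB\equiv\theta_1B+\lambda_0$ with $\lambda_0:=\phi_1+\theta_0\theta_d^*-\theta_1\theta_d^*$. For $i=2$, I would solve identity (\ref{a->ABc}) for $AB^2$ and then substitute $\alpha\mapsto\xi_\alpha$, $\gamma\mapsto\xi_\gamma$, $A\mapsto\theta_0$, $B^2A\equiv\theta_0B^2$, and $BAB\equiv\theta_1B^2+\lambda_0B$; the last two follow from left-multiplying $(A-\theta_0)$ and $(A-\theta_1)(B-\theta_d^*)-\phi_1+\theta_d^*(A-\theta_0)$ by $B^2$ and by $B$, respectively.

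For the inductive step $i\ge 3$, I would take the second Askey-Wilson relation in the presentation (\ref{presentationABc:eq1}), namely $B^3A-[3]B^2AB+[3]BAB^2-AB^3=-(q^2-q^{-2})^2(BA-AB)$, and right-multiply by $B^{i-3}$ inside $\Delta$ to obtain
\[
AB^i=B^3AB^{i-3}-[3]B^2AB^{i-2}+[3]BAB^{i-1}+(q^2-q^{-2})^2(BAB^{i-3}-AB^{i-2}).
\]
Every summand on the right has the shape $B^kAB^j$ with $j\le i-1$. The induction hypothesis provides $AB^j\equiv p_j(B)\pmod{\mathcal{I}_d(a,b,c)}$, and since $\mathcal{I}_d(a,b,c)$ is a \emph{left} ideal, left multiplication by $B^k$ preserves congruences, so $B^kAB^j\equiv B^kp_j(B)$. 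Hence $AB^i$ is congruent to a polynomial in $B$; after reducing high powers via $K_B(B)$, the result lies in $\mathcal{P}$, completing the induction.

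The principal obstacle is the one-sidedness of $\mathcal{I}_d(a,b,c)$: right multiplication does not preserve congruences, so the naive step from $AB^{i-1}\equiv p_{i-1}(B)$ to $AB^i\equiv p_{i-1}(B)B$ is illegitimate. The role of the Askey-Wilson relation is precisely to reorganize $AB^i$ into summands $B^kAB^j$ in which the potentially-bad factor $AB^j$ sits as the rightmost block and every $B^k$-prefix enters through legal left multiplication.
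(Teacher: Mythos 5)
Your proof is correct, and it takes a genuinely different route from the paper's. The paper invokes the PBW basis of $\Delta$ from Lemma~\ref{Delta:basis}: a general basis monomial $B^i C^j A^k \alpha^r \beta^s \gamma^t + \mathcal I_d(a,b,c)$ is first reduced to a scalar multiple of $B^iC^j + \mathcal I_d(a,b,c)$ by absorbing $A$, $\alpha$, $\beta$, $\gamma$ via (\ref{I1}) and (\ref{I4}), and then the paper inducts on $j$ to eliminate $C$, using (\ref{C->ABc}) to rewrite each rightmost $C$ as a combination of $AB$, $BA$, $\gamma$ before reducing again via (\ref{I1}), (\ref{I3}), (\ref{I4}). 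You instead sidestep the PBW theorem entirely: you show directly that $\mathcal P=\mathrm{span}\{B^i+\mathcal I_d(a,b,c):0\le i\le d\}$ is a left $\Delta$-submodule containing the cyclic vector, checking closure under the generating set $\{A,B,\gamma\}$, and handle the nontrivial case (closure under $A$) by an induction on the $B$-degree that uses (\ref{I1}) and (\ref{I3}) for $i\le 1$, solves (\ref{a->ABc}) for $AB^2$ at $i=2$, and at $i\ge 3$ right-multiplies the cubic Askey--Wilson relation from the presentation (\ref{presentationABc:eq1}) by $B^{i-3}$ to re-expose $A$ in positions where only legal left multiplication is needed. The paper's route is shorter once one accepts Lemma~\ref{Delta:basis}; your route is more elementary in that it leans only on the finite presentation by $A,B,\gamma$, and you correctly diagnose and neutralize the central pitfall, namely that right multiplication does not preserve congruences modulo a left ideal.
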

\begin{proof}
By Lemma~\ref{Delta:basis} the cosets $A^i C^j B^k \alpha^r \beta^s \gamma^t+I_\lambda(a,b,c)$ for all $i,j,k,r,s,t\in \N$ span $\triangle/I_\lambda(a,b,c)$. By (\ref{I1}) and (\ref{I3}), $A^i C^j B^k \alpha^r \beta^s \gamma^t+I_\lambda(a,b,c)\in \F A^i C^j+I_\lambda(a,b,c)$ for all $i,j,k,r,s,t\in \N$. Therefore it suffices to show that for any $i,j\in \N$ the coset $A^i C^j+I_\lambda(a,b,c)$ is contained in the $\F$-vector space spanned by $A^h+I_\lambda(a,b,c)$ for all $h\in \N$.

To see this we proceed by induction on $j$. There is nothing to prove for $j=0$. Assume that $j\geq 1$. By (\ref{C->ABc}), $A^i C^j$ is a linear combination of $A^iC^{j-1}AB$, $A^iC^{j-1}\gamma$ and $A^i C^{j-1}BA$. By (\ref{I1}), $A^iC^{j-1}AB+I_\lambda(a,b,c)\in \F A^iC^{j-1}A+I_\lambda(a,b,c)$. By (\ref{I3}),  $A^iC^{j-1}\gamma+I_\lambda(a,b,c)\in \F A^iC^{j-1}+I_\lambda(a,b,c)$.
By (\ref{I1}) and (\ref{I2}),  $A^i C^{j-1}BA+I_\lambda(a,b,c)$ is a linear combination of $A^iC^{j-1}+I_\lambda(a,b,c)$ and $A^iC^{j-1}A+I_\lambda(a,b,c)$.
Combining the above comments $A^iC^{j}+I_\lambda(a,b,c)$ is a linear combination of $A^iC^{j-1}+I_\lambda(a,b,c)$ and $A^iC^{j-1}A+I_\lambda(a,b,c)$. The result now follows by induction hypothesis.
\end{proof}

\begin{prop}\label{prop:VermaU}
There exists a unique $\triangle$-module isomorphism $\triangle/I_\lambda(a,b,c)\to M_\lambda(a,b,c)$ that sends $1+I_\lambda(a,b,c)$ to $m_0$.
\end{prop}
\begin{proof}
By construction there exists a unique $\triangle$-module homomorphism $\iota:\triangle/I_\lambda(a,b,c)\to M_\lambda(a,b,c)$ that sends $1+I_\lambda(a,b,c)\mapsto m_0$. The homomorphism $\iota$ sends
\begin{gather}\label{e:coset}
\prod^{i}_{h=1}(A-\theta_{h-1}(\lambda,q;a))+I_\lambda(a,b,c)
\end{gather}
to $m_i$ for all $i\in \N$.
Therefore the cosets (\ref{e:coset}) for all $i\in \N$ are linearly independent over $\F$. By Lemma~\ref{D/K:span} these cosets span $\triangle/I_\lambda(a,b,c)$ and thus form a basis of $\triangle/I_\lambda(a,b,c)$. Therefore $\iota$ is an isomorphism.
\end{proof}

By Proposition~\ref{prop:VermaU}
the Verma $\triangle$-module $M_\lambda(a,b,c)$ has the following universal property:
If $V$ is a $\triangle$-module and a vector $v\in V$ with
\begin{gather*}
Bv=\theta_0(\lambda,q;b)v, \\
(B-\theta_1(\lambda,q;b)) (A-\theta_0(\lambda,q;a))v=
\phi_1(\lambda,q;a,b,c) v, \\
\alpha v=\omega(\lambda,q;b,c,a) v,\qquad
\beta v=\omega(\lambda,q;c,a,b) v,\qquad
\gamma v=\omega(\lambda,q;a,b,c) v,
\end{gather*}
then there exists a unique $\triangle$-module homomorphism $M_\lambda(a,b,c)\to V$ that sends $m_0$ to $v$.

\subsection{The Askey-Wilson polynomials and $M_\lambda(a,b,c)$}\label{s:AW}

In this section we display that $M_\lambda(a,b,c)$ is isomorphic to the $\triangle$-module constructed from the hidden symmetry of the Askey-Wilson polynomials, provided that $q$ is not a root of unity and
\begin{eqnarray*}
q^{-2i}\not=\lambda^{-2},\;
\lambda^{-2} q^{-2} b^2, \;
\lambda^{-1} q a b c,\;
\lambda^{-1} q a b c^{-1}
\qquad \quad \hbox{for all $i\in \N$}.
\end{eqnarray*}
Here we identify $X=Y+Y^{-1}$. For each $i\in \N$ let \begin{eqnarray*}
p_i(X)&=&\sum_{j=0}^i\prod\limits_{h=1}^{j} \frac{(\theta_i(\lambda,q;b)-\theta_{h-1}(\lambda,q;b))(X-\theta_{h-1}(\lambda,q;a))} {\phi_h(\lambda,q;a,b,c)}\\
&=&{}_4\phi_3 \Biggl({ {q^{-2i}, \;\; \lambda^{-2}q^{2i}b^2,\;\; \lambda^{-1}a Y,\;\; \lambda^{-1}a Y^{-1}}\atop {\lambda^{-1}q abc,\;\; \lambda^{-1}q abc^{-1},\;\; \lambda^{-2}}}\;\Bigg\vert \; q^2,\;q^2\Biggr),
\end{eqnarray*}
where ${}_4\phi_3$ is the basic hypergeometric series. Recall from \cite{awpoly,Koe2010} that $\{p_i(X)\}_{i\in \N}$ are the Askey-Wilson polynomials with the following properties. The three-term recurrence relation for $\{p_i(X)\}_{i\in \N}$ is
\begin{gather*}
Xp_i(X)=a_i\hspace{0.5mm} p_{i+1}(X) +
c_i\hspace{0.5mm} p_i(X)+
b_i\hspace{0.5mm} p_{i-1}(X)
\qquad \quad \hbox{for $i\in \N$,}
\end{gather*}
where $p_{-1}(X)$ is interpreted to be $0$ and
\begin{eqnarray*}
a_i &=&
\frac{
(\lambda^{-1}q^ib-\lambda q^{-i}b^{-1})
\,\phi_{i+1}(\lambda,q;a,b,c)
}
{
(q^{i+1}-q^{-1-i})
(\lambda^{-1}q^{2i}b-\lambda q^{-2i}b^{-1})
(\lambda^{-1}q^{2i+1}b-\lambda q^{-1-2i}b^{-1})
},\\
b_{i} &=&
\frac
{
(q^ib-q^{-i}b^{-1})
\, \phi_i(\lambda,q;a^{-1},b,c)
}
{
(\lambda^{-1}q^{i-1}-\lambda q^{1-i})
(\lambda^{-1} q^{2i-1}b-\lambda q^{1-2i}b^{-1})
(\lambda^{-1}q^{2i}b-\lambda q^{-2i}b^{-1})
},\\
c_i &=& \theta_0(\lambda,q;a)-a_i-b_i
\end{eqnarray*}
for $i\in \N$.
The Askey-Wilson operator $\D$ is the linear transformation $\F(Y)\to \F(Y)$ defined by
$$
\D\hspace{0.05mm}f(Y) =
A(Y)f(q^2 Y)-(A(Y)+A(Y^{-1})-\lambda b^{-1}-\lambda^{-1}b)f(Y)+A(Y^{-1}) f(q^{-2} Y)
$$
for $f(Y)\in \F(Y)$, where
$$
A(Y)=
\frac{ \lambda(1-\lambda^{-1} a Y) (1-\lambda^{-1} a^{-1} Y)
(1-q bc Y) (1-q bc^{-1} Y) }
{b\,(1-Y^2) (1-q^2 Y^2) }.
$$
For each $i\in \N$ the polynomial $p_i(X)$ is an eigenfunction of $\D$ with respect to the eigenvalue $ \theta_i(\lambda,q;b)$. By \cite[Introduction]{hidden_sym} the polynomial ring $\F[X]$ supports a $\triangle$-module on which
\begin{eqnarray*}
\begin{array}{clclcl}
& &\F[X] \quad  &\to \quad
&\F[X]\\
A &: \quad  &p(X) \quad
&\mapsto \quad & X p(X),\\
B &:\quad   &p(X) \quad
&\mapsto \quad &\D\hspace{0.05mm} p(X)
\end{array}
\end{eqnarray*}
for all $p(X)\in \F[X]$ and $\gamma$ acts as scalar multiplication by $\omega(\lambda,q;a,b,c)$. By the universal property of $M_\lambda(a,b,c)$ it is routine to show that there is a $\triangle$-module isomorphism
\begin{equation*}
\begin{array}{cccc}
M_\lambda(a,b,c)
\quad  &\to
\quad &\F[X]\\
m_i \quad &\mapsto \quad  &\displaystyle\prod\limits_{h=1}^{i} (X-\theta_{h-1}(\lambda,q;a))
&\qquad \quad \hbox{for all $i\in \N$}.
\end{array}
\end{equation*}

\subsection{Four bases of $M_\lambda(a,b,c)$}\label{s:4}

For convenience the basis $\{m_i\}_{i\in \N}$ of $M_\lambda(a,b,c)$ will be said to be {\it canonical}. In this section we make use of the universal property of the Verma $\triangle$-modules to obtain four bases of $M_\lambda(a,b,c)$ on which the action of $A$, $B$, $C$ is similar to that on the canonical basis. To present the symmetry of these bases,
we define an action of the Klein group $V_4=\{\pm 1\}\times \{1,\sigma\}$ on the set consisting of all $5$-tuples $(\lambda,q^\e;a,b,c)\in \F^5$ with $a,b,c,\lambda\not=0$ and $\e\in\{\pm 1\}$ by
\begin{gather*}
(\lambda,q^\e;a,b,c)^{(-1,1)} = (\lambda,q^\e;a^{-1},b,c^{-1}),\qquad \quad
(\lambda,q^\e;a,b,c)^{(1,\sigma)} = (\lambda^{-1},q^{-\e};c^{-1},b^{-1},a^{-1}).
\end{gather*}

\begin{prop}\label{lem:M4bases}
For each $(\e,g)\in V_4$ there exists a unique basis $\{m_i^{(\e,g)}\}_{i\in \N}$ of $M_\lambda(a,b,c)$ with $m_0^{(\e,g)}=m_0$ and with respect to which the matrices representing $(A,B,C)^g$ are
$$
L(\Lambda,Q;X), \qquad \quad
U(\Lambda,Q;X,Y,Z), \qquad \quad
T(\Lambda,Q;X,Y,Z) \qquad \quad
$$
with $(\Lambda,Q;X,Y,Z)=(\lambda,q;a,b,c)^{(\e,g)}$, respectively.
\end{prop}
\begin{proof}
(Uniqueness): From the matrix $L(\lambda,q;a,b,c)$ representing $A$ with respect to the basis $\{m_i^{(1,1)}\}_{i\in \N}$ of $M_\lambda(a,b,c)$, we see that
$$
m_i^{(1,1)}=\prod\limits_{h=1}^{i} (A-\theta_{h-1}(\lambda,q;a))
m_0^{(1,1)}
\qquad \quad \hbox{for all $i\in\N^*$}.
$$
Since $m_0^{(1,1)}$ is fixed these vectors $m_i^{(1,1)}$ for all $i\in \N^*$ are uniquely determined. Therefore the uniqueness of $\{m_i^{(1,1)}\}_{i\in \N}$ follows. By similar arguments the uniqueness of $\{m_i^{(\e,g)}\}_{i\in \N}$ follows for each $(\e,g)\in V_4$.

(Existence): The canonical basis of $M_\lambda(a,b,c)$ is exactly $\{m_i^{(1,1)}\}_{i\in \N}$. Since $V_4$ is generated by $(-1,1)$ and $(1,\sigma)$, it is enough to show the existence of $\{m_i^{(-1,1)}\}_{i\in \N}$ and $\{m_i^{(1,\sigma)}\}_{i\in \N}$. We first prove the existence of  $\{m_i^{(-1,1)}\}_{i\in \N}$. It is routine to verify that
\begin{gather*}
(B-\theta_1(\lambda,q;b))(A-\theta_0(\lambda,q;a^{-1})) m_0
=\phi_1(\lambda,q;a^{-1},b,c^{-1}) m_0.
\end{gather*}
The central elements $\alpha$, $\beta$, $\gamma$ act on $M_\lambda(a,b,c)$ as scalar multiplications by
$$
\omega(\lambda,q;b,c^{-1},a^{-1}), \qquad \quad \omega(\lambda,q;c^{-1},a^{-1},b), \qquad \quad
\omega(\lambda,q;a^{-1},b,c^{-1})
$$
respectively.
Let $\{u_i\}_{i\in \N}$ denote the canonical basis of $M_\lambda(a^{-1},b,c^{-1})$.
By the universal property of $M_\lambda(a^{-1},b,c^{-1})$ there exists a $\triangle$-module isomorphism $M_\lambda(a^{-1},b,c^{-1})\to M_\lambda(a,b,c)$ that sends $u_i$ to
\begin{gather}\label{e:basis1}
\prod_{h=1}^{i} (A-\theta_{h-1}(\lambda,q;a^{-1}))m_0 \qquad \quad \hbox{for all $i\in \N$.}
\end{gather}
The matrices representing $A$, $B$, $C$ with respect to the basis $\{u_i\}_{i\in \N}$ of $M_\lambda(a^{-1},b,c^{-1})$ are
$L(\lambda,q;a^{-1})$, $U(\lambda,q;a^{-1},b,c^{-1})$, $T(\lambda,q;a^{-1},b,c^{-1})$ respectively. Thus
(\ref{e:basis1}) is the basis
$\{m_i^{(-1,1)}\}_{i\in \N}$
of $M_\lambda(a,b,c)$.

We next prove the existence of  $\{m_i^{(1,\sigma)}\}_{i\in \N}$. A direct calculation yields that
$$
(B-\theta_1(\lambda,q;b))
(C^\vee-\theta_0(\lambda,q;c))
m_0
=\phi_1(\lambda,q;c,b,a)m_0.
$$
The central elements $\alpha$, $\beta$, $\gamma$ act on $M_\lambda(a,b,c)$ as scalar multiplications by
$$
\omega(\lambda,q;c,b,a), \qquad \quad
\omega(\lambda,q;a,c,b), \qquad \quad
\omega(\lambda,q;b,a,c)
$$
respectively. By Lemma~\ref{lem:auto} the automorphism $\widetilde \sigma$ of $\triangle$ sends
\begin{gather*}
(A,B,C,\alpha,\beta,\gamma) \quad \mapsto \quad (C^\vee,B,A,\gamma,\beta,\alpha).
\end{gather*}
Let $\{v_i\}_{i\in \N}$ denote the canonical basis of $M_\lambda(c,b,a)$.
By the universal property of $M_\lambda(c,b,a)$ there exists a unique $\triangle$-module homomorphism $M_\lambda(c,b,a)\to M_\lambda(a,b,c)^\sigma$ that sends $v_0$ to $m_0$. Pulling back via $\widetilde\sigma$ we obtain a $\triangle$-module homomorphism
$\iota:M_\lambda(c,b,a)^\sigma\to M_\lambda(a,b,c)$ that sends $v_i$ to
\begin{gather}\label{e:basis2}
\prod_{h=1}^i(C-\theta_{h-1}(\lambda,q;c))m_0
\qquad \quad \hbox{for all $i\in \N$}.
\end{gather}
The tridiagonal matrix $T(\lambda,q;a,b,c)$ representing $C$ with respect to the basis $\{m_i\}_{i\in \N}$ of $M_\lambda(a,b,c)$ has nonzero subdiagonal entries
$$
T(\lambda,q;a,b,c)_{i,i-1}=-\lambda q^{1-2i} b^{-1}
\qquad \quad
\hbox{for all $i\in \N^*$}.
$$
Therefore (\ref{e:basis2}) is a basis of $M_\lambda(a,b,c)$ and $\iota$ is an isomorphism. The matrices representing $C$, $B$ with respect to the basis $\{v_i\}_{i\in \N}$ of $M_\lambda(c,b,a)^\sigma$ are equal to
$L(\lambda^{-1},q^{-1};c^{-1})$, $U(\lambda^{-1},q^{-1};c^{-1},b^{-1},a^{-1})$
respectively. By (\ref{e:Cvee}) the matrix representing $A$ with respect to the basis $\{v_i\}_{i\in \N}$ of $M_\lambda(c,b,a)^\sigma$ is
$T(\lambda^{-1},q^{-1};c^{-1},b^{-1},a^{-1})$. Therefore (\ref{e:basis2}) is the basis $\{m_i^{(1,\sigma)}\}_{i\in \N}$ of $M_\lambda(a,b,c)$.
\end{proof}


By Lemma~\ref{lem:recrel1} we have

\begin{lem}\label{lem:4TranMat}
For each $(\e,g)\in V_4$ the transition matrices from $\{m_i^{(\e,g)}\}_{i\in \N}$ to $\{m_i^{(-\e,g)}\}_{i\in \N}$ and
$\{m_i^{(\e,g\sigma)}\}_{i\in \N}$ are
$E(\Lambda,Q;X)$ and $S(\Lambda,Q;X,Y,Z)$
with $(\Lambda,Q;X,Y,Z)=(\lambda,q;a,b,c)^{(\e,g)}$, respectively.
\end{lem}

\section{Finite-dimensional irreducible $\triangle$-modules}
\label{section:L(a,b,c)}

We are going to classify the finite-dimensional irreducible $\triangle$-modules when $q$ is not a root of unity. To do this, from now on we set $\lambda=q^n$ and start with a quotient $\triangle$-module of $M_\lambda(a,b,c)$.

\subsection{The quotient $\triangle$-module $V_n(a,b,c)$ of $M_\lambda(a,b,c)$}

Define $N_\lambda(a,b,c)$ to be the $\F$-subspace of $M_\lambda(a,b,c)$ spanned by the $m_{i}$ for all $i\geq n+1$. It is equivalent to say $N_\lambda(a,b,c)=K_a(A)M_\lambda(a,b,c)$, where
$$
K_Y(X)=\prod_{i=0}^n(X-\theta_i(q;Y)).
$$
By construction $N_\lambda(a,b,c)$ is $A$-invariant.
By the setting $\lambda=q^n$ the $(n,n+1)$-entry $\phi_{n+1}(q;a,b,c)$ of $U(\lambda,q;a,b,c)$ is zero. It follows that $N_\lambda(a,b,c)$ is $B$-invariant. Since $\triangle$ is generated by $A$, $B$, $\gamma$ the $\F$-space $N_\lambda(a,b,c)$ is a $\triangle$-module and hence
$$
V_n(a,b,c)=M_\lambda(a,b,c)/N_\lambda(a,b,c)
$$
is a $\triangle$-module of dimension $n+1$. A criterion for an irreducible $\triangle$-module to be isomorphic to $V_n(a,b,c)$ immediately arises from the definition of $V_n(a,b,c)$.

\begin{lem}\label{lem:preiso}
Let $V$ denote an $(n+1)$-dimensional irreducible $\triangle$-module. Assume that
\begin{enumerate}
\item there is a nontrivial $\triangle$-module homomorphism $\iota:M_\lambda(a,b,c)\to V$;

\item $K_a(X)$ is the characteristic polynomial of $A$ on $V$.
\end{enumerate}
Then there is a $\triangle$-module isomorphism $V_n(a,b,c)\to V$ given by
\begin{gather*}
m+N_\lambda(a,b,c)\quad \mapsto \quad \iota(m) \qquad \quad \hbox{for all $m\in M_\lambda(a,b,c)$}.
\end{gather*}
\end{lem}
\begin{proof}
Since $V$ is irreducible $\iota$ is surjective. By the Cayley-Hamilton theorem $K_a(A)$ vanishes on $V$ and thus $N_\lambda(a,b,c)$ is contained in the kernel of $\iota$. This lemma follows.
\end{proof}

For each $(\e,g)\in V_4$ let
$$
v^{(\e,g)}_i=m_i^{(\e,g)}+N_\lambda(a,b,c) \qquad \quad (0\leq i\leq n).
$$
Clearly $\{v_i^{(1,1)}\}^n_{i=0}$ is a basis of $V_n(a,b,c)$. The matrix $E(\lambda,q;a)$ is upper triangular with nonzero diagonal entries and so is $S(\lambda,q;a,b,c)$. Therefore, by Lemma~\ref{lem:4TranMat}, $\{v_i^{(\e,g)}\}^n_{i=0}$ is a basis of $V_n(a,b,c)$ for each $(\e,g)\in V_4$.

\begin{lem}\label{lem:V4bases}
The matrices representing $A$, $B$, $C$ with respect to the basis $\{v_i^{(\e,g)}\}^n_{i=0}$ of $V_n(a,b,c)$ for each $(\e,g)\in V_4$ are as follows:

\begin{table}[H]
\centering
\begin{tabular}{c||c|c|c}
&$A$ &$B$ &$C$\\
\hline
$\{v_i^{(1,1)}\}_{i=0}^n$
&$L(q;a)$
&$U(q;a,b,c)$
&$T(q;a,b,c)$\\

$\{v_i^{(-1,1)}\}_{i=0}^n$
&$L(q;a^{-1})$
&$U(q;a^{-1},b,c^{-1})$
&$T(q;a^{-1},b,c^{-1})$\\

$\{v_i^{(1,\sigma)}\}_{i=0}^n$
&$T(q^{-1};c^{-1},b^{-1},a^{-1})$
&$U(q^{-1};c^{-1},b^{-1},a^{-1})$
&$L(q^{-1};c^{-1})$\\

$\{v_i^{(-1,\sigma)}\}_{i=0}^n$
&$T(q^{-1};c,b^{-1},a)$
&$U(q^{-1};c,b^{-1},a)$
&$L(q^{-1};c)$
\end{tabular}
\end{table}
\end{lem}
\begin{proof}
If $\Lambda=Q^n$ then all entries on the $(n+1)$th column of $E(\Lambda,Q;X)$ are zero except for the diagonal entry and so is $S(\Lambda,Q;X,Y,Z)$. Therefore, by Lemma~\ref{lem:4TranMat}, $m_{n+1}^{(\e,g)}\in N_\lambda(a,b,c)$ for each $(\e,g)\in V_4$. The result now follows by Proposition~\ref{lem:M4bases}.
\end{proof}

\begin{lem}\label{lem:chara}
On the $\triangle$-module $V_n(a,b,c)$
\begin{enumerate}
\item the characteristic polynomials of $A$, $B$, $C$ are $K_a(X)$, $K_b(X)$, $K_c(X)$ respectively;

\item the traces of $A$, $B$, $C$ are $[n+1]_q(a+a^{-1})$, $[n+1]_q(b+b^{-1})$, $[n+1]_q(c+c^{-1})$ respectively.
\end{enumerate}
\end{lem}
\begin{proof}
(i) By Lemma~\ref{lem:V4bases} the matrices representing $A$, $B$ with respect to $\{v_i^{(1,1)}\}_{i=0}^n$ are lower and upper bidiagonal with $\{\theta_i(q;a)\}^n_{i=0}$, $\{\theta_i(q;b)\}^n_{i=0}$ on the diagonal entries, respectively. Therefore $K_a(X)$, $K_b(X)$ are the characteristic polynomials of $A$, $B$ on $V_n(a,b,c)$, respectively. By Lemma~\ref{lem:V4bases} the matrix representing $C$ with respect to $\{v_i^{(1,\sigma)}\}^n_{i=0}$ is lower bidiagonal with $\{\theta_i(q;c)\}^n_{i=0}$ on the diagonal entries. Therefore $K_c(X)$ is the characteristic polynomial of $C$ on $V_n(a,b,c)$.

(ii) By (i) we have to show that
$$
\sum^n_{i=0} \theta_i(Q;X)=[n+1]_Q(X+X^{-1}).
$$
To get the equality, apply the summation formula of the geometric series.
\end{proof}

\subsection{The irreducibility conditions for $V_n(a,b,c)$}

This section is devoted to proving the following necessary and sufficient conditions for $V_n(a,b,c)$ to be irreducible.

\begin{thm}\label{thm:irr}
The $\triangle$-module $V_n(a,b,c)$ is irreducible if and only if the following conditions hold:
\begin{enumerate}
\item $q^{2i}\not=1$ for $1\leq i\leq n$.

\item $abc$, $a^{-1}bc$, $ab^{-1}c$, $abc^{-1}\not\in\{q^{1-n},q^{3-n},\ldots,q^{n-1}\}$.
\end{enumerate}
\end{thm}
\begin{proof}
(Necessity): By Lemma~\ref{lem:V4bases} the matrices representing $A$, $B$ with respect to $\{v_i^{(1,1)}\}^n_{i=0}$ are $L(q;a)$, $U(q;a,b,c)$ respectively. Thus, if there exists $1\leq i\leq n$ such that the $(i-1,i)$-entry $\phi_i(q;a,b,c)$ of $U(q;a,b,c)$ is zero, then $\sum\limits_{h=i}^n \F v_h^{(1,1)}$ is $A$- and $B$-invariant and therefore is a $\triangle$-module, contrary to the irreducibility of $V_n(a,b,c)$.
Therefore
\begin{gather}\label{e:irr1}
\phi_i(q;a,b,c)\not=0 \qquad \quad (1\leq i\leq n).
\end{gather}
The matrices representing $A$, $B$ with respect to $\{v_i^{(-1,1)}\}_{i=0}^n$ are $L(q;a^{-1})$, $U(q;a^{-1},b,c^{-1})$ respectively. Thus a similar argument leads to
\begin{gather}\label{e:irr2}
\phi_i(q;a^{-1},b,c^{-1})\not=0 \qquad \quad (1\leq i\leq n).
\end{gather}
Conditions (i), (ii) are equivalent to (\ref{e:irr1}), (\ref{e:irr2}).

(Sufficiency):
Let
\begin{gather*}
R = \prod_{h=1}^{n}(B-\theta_h(q;b)),\qquad \quad
S_i = \prod_{h=1}^{n-i}(A-\theta_{h-1}(q^{-1};a)) \qquad \quad (0\leq i\leq n).
\end{gather*}
By Lemma~\ref{lem:V4bases} we have $R v \in \F v_0^{(1,1)}$ for all $v\in V_n(a,b,c)$. Let $F$ denote the $(n+1)\times (n+1)$ matrix such that
$$
R\, S_i v_j^{(1,1)}=F_{ij}\, v_0^{(1,1)} \qquad \quad (0\leq i,j\leq n).
$$
A routine calculation shows that $F$ satisfies the conditions given in Lemma~\ref{lem:recrel2}(i) with $(Q;X,Y,Z)=(q;a,b,c)$.
Therefore $F=F(q;a,b,c)$ is lower triangular. By (i), (ii) 
\begin{equation*}
F_{ii}=\prod_{h=1}^i
\frac{[n-i+h]_{q}}{[n-h+1]_{q}}\, \phi_h(q;a,b,c)
\prod^{n-i}_{h=1}\phi_h(q;a^{-1},b,c)
\qquad \quad (0\leq i\leq n)
\end{equation*}
are nonzero and thus $F$ is invertible.

Now let $V$ denote a nonzero $\triangle$-submodule of $V_n(a,b,c)$. We show that $V=V_n(a,b,c)$. Pick a nonzero vector $v\in V$ and  
consider
\begin{gather}\label{e:irr}
R\, S_i v = a_i v_0^{(1,1)} \qquad \quad (0\leq i\leq n),
\end{gather}
where $a_i\in \F$. Since $V$ is a $\triangle$-module $a_iv_0^{(1,1)}\in V$ for each $0\leq i\leq n$. On the other hand, write $v=\sum\limits_{i=0}^n b_i v_i^{(1,1)}$ where $b_i\in \F$ and express the equations (\ref{e:irr}) as the matrix equation
\begin{equation*}
F(b_0,b_1,\ldots,b_n)^t=(a_0,a_1,\ldots,a_n)^t,
\end{equation*}
where ``$t$'' means the transpose. Since at least one of $b_i$ for all $0\leq i\leq n$ is nonzero, the invertibility of $F$ implies that at least one of $a_i$ for all $0\leq i\leq n$ is nonzero. Therefore $v_0^{(1,1)}\in V$ which generates $V_n(a,b,c)$ as a $\triangle$-module, as claimed.
\end{proof}

\subsection{$24$ bases of irreducible $V_n(a,b,c)$}

When the $\triangle$-module $V_n(a,b,c)$ is irreducible, there are $24$ bases of $V_n(a,b,c)$, described in Proposition~\ref{lem:24}, with respect to which the matrices representing $A$, $B$, $C$ have similar forms to those in Lemma~\ref{lem:V4bases}. Some of these bases will be used in the proofs of Lemma~\ref{lem:iso}, Theorem~\ref{thm:lp} and Theorem~\ref{thm:U}.

Similar to Section~\ref{s:4}, in order to describe the symmetry among the $24$ bases,
we define an action of $\{\pm 1\}^2\rtimes \S_3$ on the set consisting of all $4$-tuples $(q^\e;a,b,c)\in \F^4$ with $a,b,c\not=0$ and $\e\in\{\pm 1\}$ by
\begin{gather*}
\begin{array}{ll}
(q^\e;a,b,c)^{(-1,1,1)} = (q^\e;a^{-1},b,c^{-1}),
\qquad \quad
&(q^\e;a,b,c)^{(1,1,\sigma)} = (q^{-\e};c^{-1},b^{-1},a^{-1}),\\
(q^\e;a,b,c)^{(1,-1,1)} = (q^\e;a,b^{-1},c^{-1}),
\qquad \quad
&(q^\e;a,b,c)^{(1,1,\tau)} =(q^{-\e};b^{-1},a^{-1},c^{-1}),
\end{array}
\end{gather*}
where $\{\pm 1\}^2\rtimes \S_3$ is the semidirect product of $\S_3$ by $\{\pm 1\}^2$ with respect to the group homomorphism $\S_3\to {\rm Aut}\big(\{\pm 1\}^2\big)$ defined by
\begin{gather*}
(-1,1)^\sigma = (-1,1), \qquad  (1,-1)^\sigma = (-1,-1), \qquad
(-1,1)^\tau = (1,-1), \qquad  (1,-1)^\tau = (-1,1).
\end{gather*}

\begin{prop}\label{lem:24}
Assume that the $\triangle$-module $V_n(a,b,c)$ is irreducible. For each $(\e_0,\e_1,g)\in \{\pm 1\}^2\rtimes\S_3$, up to scalar multiplication, there exists a unique basis $\{v_i^{(\e_0,\e_1,g)}\}_{i=0}^n$ of $V_n(a,b,c)$ with respect to which the matrices representing $(A,B,C)^g$ are
$$L(Q;X), \qquad \quad
U(Q;X,Y,Z), \qquad \quad
T(Q;X,Y,Z)
$$
with $(Q;X,Y,Z)=(q;a,b,c)^{(\e_0,\e_1,g)}$, respectively.
\end{prop}
\begin{proof}
(Uniqueness): From the matrix $L(q;a)$ representing $A$ with respect to $\{v_i^{(1,1,1)}\}^n_{i=0}$, we see that
$$
v_i^{(1,1,1)} = \prod_{h=1}^{i}(A-\theta_{h-1}(q;a))v_0^{(1,1,1)} \qquad \quad (1\leq i\leq n).
$$
The first column of $U(q;a,b,c)$ implies that $v_0^{(1,1,1)}$ is an eigenvector of $B$ on $V_n(a,b,c)$ with respect to $\theta_0(q;b)$. The uniqueness of $\{v_i^{(1,1,1)}\}^n_{i=0}$ is now immediate from this lemma:

\begin{lem}\label{lem:eigen1}
Each eigenspace of $A$, $C$ on $V_n(a,b,c)$ is of dimension one. Moreover, if $V_n(a,b,c)$ is irreducible
then each eigenspace of $B$ on $V_n(a,b,c)$ is of dimension one.
\end{lem}
\begin{proof}
By Lemma~\ref{lem:V4bases}
the matrix representing $A$ (resp. $C$) with respect to the basis $\{v_i^{(1,1)}\}^n_{i=0}$ (resp. $\{v_i^{(1,\sigma)}\}_{i=0}^n$) of $V_n(a,b,c)$ is lower bidiagonal with nonzero subdiagonal entries. Therefore each eigenspace of $A$, $C$ on $V_n(a,b,c)$ is of dimension one. Suppose that $V_n(a,b,c)$ is irreducible. The matrix representing $B$ with respect to the basis $\{v_i^{(1,1)}\}^n_{i=0}$ of $V_n(a,b,c)$ is the upper bidiagonal matrix $U(q;a,b,c)$. By Theorem~\ref{thm:irr} the superdiagonal entries of $U(q;a,b,c)$ are nonzero. Therefore each eigenspace of $B$ on $V_n(a,b,c)$ is of dimension one.
\end{proof}
\noindent By similar arguments the uniqueness of $\{v_i^g\}^n_{i=0}$ follows for each $g\in \{\pm 1\}^2\rtimes\S_3$.

(Existence): By Lemma~\ref{lem:V4bases}, $\{v_i^{(\e,g)}\}^n_{i=0}$ is the basis $\{v_i^{(\e,1,g)}\}^n_{i=0}$ of $V_n(a,b,c)$ for each $(\e,g)\in V_4$. Since $\{\pm 1\}^2\rtimes \S_3$ is generated by $(-1,1,1)$, $(1,-1,1)$, $(1,1,\sigma)$, $(1,1,\tau)$ it remains to show the existence of $\{v_i^{(1,-1,1)}\}^n_{i=0}$ and   $\{v_i^{(1,1,\tau)}\}^n_{i=0}$. We first show the existence of $\{v_i^{(1,-1,1)}\}^n_{i=0}$. It is routine to verify that
\begin{gather}\label{e:1-11}
u=\sum_{i=0}^n
\prod\limits_{h=1}^{n-i}\phi_{h}(q;a^{-1},b^{-1},c)
\prod\limits_{h=1}^i (\theta_0(q;b^{-1})-\theta_{h-1}(q^{-1};b^{-1}))
v_i^{(1,1)}
\end{gather}
satisfies $Bu=\theta_0(q;b^{-1})u$ and
$
(B-\theta_1(q;b^{-1}))
(A-\theta_0(q;a))u=\phi_1(q;a,b^{-1},c^{-1})
\hspace{0.5mm}u.
$
The central elements $\alpha$, $\beta$, $\gamma$ act on $V_n(a,b,c)$ as scalar multiplications by
$$
\omega(q;b^{-1},c^{-1},a), \qquad \quad
\omega(q;c^{-1},a,b^{-1}), \qquad \quad
\omega(q;a,b^{-1},c^{-1})
$$
respectively. Let $\{u_i\}_{i\in \N}$ denote the canonical basis of $M_\lambda(a,b^{-1},c^{-1})$. Combining the above comments, the universal property of $M_\lambda(a,b^{-1},c^{-1})$ implies that there is a $\triangle$-module homomorphism
$M_\lambda(a,b^{-1},c^{-1})\to V_n(a,b,c)$ that sends $u_0\mapsto u$.
Recall from Lemma~\ref{lem:chara}(i) that $K_a(X)$ is the characteristic polynomial of $A$ on $V_n(a,b,c)$. By Lemma~\ref{lem:preiso} there is a $\triangle$-module isomorphism $V_n(a,b^{-1},c^{-1})\to V_n(a,b,c)$ that sends $u_i+N_\lambda(a,b^{-1},c^{-1})$ to
\begin{gather}\label{e:basis3}
\prod_{h=1}^i(A-\theta_{h-1}(q;a))u
\qquad \quad (0\leq i\leq n).
\end{gather}
By Lemma~\ref{lem:V4bases}, with respect to the basis $\{u_i+N_\lambda(a,b^{-1},c^{-1})\}^n_{i=0}$ of $V_n(a,b^{-1},c^{-1})$ the matrices representing $A$, $B$, $C$  are $L(q;a)$, $U(q;a,b^{-1},c^{-1})$, $T(q;a,b^{-1},c^{-1})$ respectively.
Therefore (\ref{e:basis3}) is the basis $\{v_i^{(1,-1,1)}\}^n_{i=0}$ of $V_n(a,b,c)$.

We next prove the existence of  $\{v_i^{(1,1,\tau)}\}^n_{i=0}$. A direct calculation yields that
\begin{gather}\label{e:11tau}
v=\sum_{i=0}^n
\prod\limits_{h=1}^{i}(\theta_0(q;a)-\theta_{h-1}(q^{-1};a)) v_{n-i}^{(1,1)}
\end{gather}
satisfies $Av=\theta_0(q;a) v$ and
$
(A-\theta_1(q;a))(B-\theta_0(q;b))v=\phi_1(q;b,a,c)v.
$
The central elements $\alpha$, $\beta$, $\gamma$ act on $V_n(a,b,c)$ as scalar multiplications by
$$
\omega(q;c,b,a), \qquad \quad
\omega(q;a,c,b), \qquad \quad
\omega(q;b,a,c)
$$
respectively.
Recall from Lemma~\ref{lem:auto} that the automorphism $\widetilde\tau$ of $\triangle$ maps
\begin{gather*}
(A,B,C,\alpha,\beta,\gamma) \quad \mapsto \quad (B,A,C^\vee,\beta,\alpha,\gamma).
\end{gather*}
Let $\{v_i\}_{i\in \N}$ denote the canonical basis of $M_\lambda(b,a,c)$. Combining the above comments, the universal property of $M_\lambda(b,a,c)$ implies that there exists a $\triangle$-module homomorphism $M_\lambda(b,a,c) \to V_n(a,b,c)^{\tau}$ that maps $v_0$ to $v$. By Lemma~\ref{lem:chara}(i), $K_b(X)$ is the characteristic polynomial of $A$ on $V_n(a,b,c)^{\tau}$. By Lemma~\ref{lem:preiso} there is a $\triangle$-module isomorphism $V_n(b,a,c)\to V_n(a,b,c)^\tau$ that maps $v_0+N_\lambda(b,a,c)$ to $v$. Pulling back via $\widetilde \tau$ we obtain a $\triangle$-module isomorphism $V_n(b,a,c)^\tau \to V_n(a,b,c)$ that sends $v_i+N_\lambda(b,a,c)$ to
\begin{gather}\label{e:basis4}
\prod_{h=1}^i (B-\theta_{h-1}(q;b))v
\qquad \quad (0\leq i\leq n).
\end{gather}
By Lemma~\ref{lem:V4bases} the matrices representing $B$, $A$ with respect to the basis $\{v_i+N_\lambda(b,a,c)\}^n_{i=0}$ of $V_n(b,a,c)^\tau$ are
$L(q^{-1};b^{-1})$, $U(q^{-1};b^{-1},a^{-1},c^{-1})$
respectively. By (\ref{e:Cvee}) the matrix representing $C$ with respect to $\{v_i+N_\lambda(b,a,c)\}^n_{i=0}$ is
$T(q^{-1};b^{-1},a^{-1},c^{-1})$.
Therefore (\ref{e:basis4}) is the basis $\{v_i^{(1,1,\tau)}\}_{i=0}^n$ of $V_n(a,b,c)$.
\end{proof}

We end this section with the remark that for each $g\in \{\pm1\}^2\rtimes \S_3$, up to scalar multiplication the transition matrices from $\{v_i^g\}_{i=0}^n$ to
$\{v_i^{g(-1,1,1)}\}_{i=0}^n$,
$\{v_i^{g(1,-1,1)}\}_{i=0}^n$,
$\{v_i^{g(1,1,\sigma)}\}_{i=0}^n$,
$\{v_i^{g(1,1,\tau)}\}_{i=0}^n$
are
$$
E(Q;X), \qquad
F(Q;X,Y^{-1},Z), \qquad
S(Q;X,Y,Z), \qquad
P(Q;X,Y,Z)E(Q^{-1};Y)
$$
with $(Q;X,Y,Z)=(q;a,b,c)^g$, respectively. To see this, without loss we assume that $g=(1,1,1)$. By Lemma~\ref{lem:4TranMat} the transition matrices from $\{v_i^{(1,1,1)}\}_{i=0}^n$ to $\{v_i^{(-1,1,1)}\}_{i=0}^n$ and $\{v_i^{(1,1,\sigma)}\}_{i=0}^n$ are as claimed. Let $F$ denote the transition matrix from $\{v_i^{(1,1,1)}\}_{i=0}^n$ to $\{v_i^{(1,-1,1)}\}_{i=0}^n$. Since $F_{i0}$ for each $0\leq i\leq n$ can be chosen as the coefficient of $v_i^{(1,1)}$ in (\ref{e:1-11}), the matrix $F$ satisfies the conditions given in Lemma~\ref{lem:recrel2}(i) with $(Q;X,Y,Z)=(q;a,b^{-1},c)$. Therefore $F=F(q;a,b^{-1},c)$. To get the transition matrix from
$\{v_i^{(1,1,1)}\}_{i=0}^n$ to $\{v_i^{(1,1,\tau)}\}_{i=0}^n$, we decompose it as the product of the transition matrix $P$ from $\{v_i^{(1,1,1)}\}_{i=0}^n$ to $\{v_i^{(1,-1,\tau)}\}_{i=0}^n$ with the transition matrix $E(q^{-1};b)$ from
$\{v_i^{(1,-1,\tau)}\}_{i=0}^n$ to $\{v_i^{(1,1,\tau)}\}_{i=0}^n$. Since $P_{n-i,0}$ for each $0\leq i\leq n$ can be chosen as the coefficient of $v_{n-i}^{(1,1)}$ in (\ref{e:11tau}), the matrix $P$ satisfies the conditions given in Lemma~\ref{lem:recrel2}(ii) with $(Q;X,Y,Z)=(q;a,b,c)$. Therefore $P=P(q;a,b,c)$.

\subsection{Finite-dimensional irreducible $\triangle$-modules at $q$ not a root of unity}\label{s:class}

We are now in the position to prove the main result of this paper:

\begin{thm}\label{thm:class}
Assume that $q$ is not a root of unity. Let $\M$ denote the set of the isomorphism classes of irreducible $\triangle$-modules that have dimension $n+1$. Let $\T$ denote the set of all triples $(a,b,c)$ of nonzero scalars in $\F$ that satisfy Theorem~{\rm\ref{thm:irr}(ii)}:
$$
abc,\;
a^{-1}bc,\;
ab^{-1}c,\;
abc^{-1}
\notin\{q^{1-n},q^{3-n},\ldots,q^{n-1}\}.
$$
Define an action of the group $\{\pm1\}^3$ on $\T$ by
\begin{equation*}
(a,b,c)^{(-1,1,1)}=(a^{-1},b,c), \qquad \quad
(a,b,c)^{(1,-1,1)}=(a,b^{-1},c), \qquad \quad
(a,b,c)^{(1,1,-1)}=(a,b,c^{-1})
\end{equation*}
for all $(a,b,c)\in \T$. Let $\T/\{\pm 1\}^3$ denote the set of the $\{\pm1\}^3$-orbits of $\T$.
For $(a,b,c)\in \T$ let $[a,b,c]$ denote the $\{\pm 1\}^3$-orbit of $\T$ that contains $(a,b,c)$.
Then there is a bijection $\T/\{\pm1\}^3\to \M$ given by
\begin{equation*}
[a,b,c] \quad \mapsto \quad \hbox{the isomorphism class of $V_n(a,b,c)$}
\qquad \quad \hbox{for $[a,b,c]\in \T/\{\pm1\}^3$}.
\end{equation*}
\end{thm}
\begin{proof}
We begin with a lemma, the ``if'' part of which implies the existence of the map $\T/\{\pm1\}^3\to \M$ and the ``only if'' part implies that the map is injective.

\begin{lem}\label{lem:iso}
Assume that $q^{2i}\not=1$ for $1\leq i\leq n$.
For $(a,b,c)$ and $(r,s,t)$ in $\T$,
$V_n(a,b,c)$ is isomorphic to $V_n(r,s,t)$ if and only if $[a,b,c]=[r,s,t]$.
\end{lem}
\begin{proof}
(Necessity): Any two isomorphic finite-dimensional $\triangle$-modules have the same trace map.
By Lemma~\ref{lem:chara}(ii) this part follows.

(Sufficiency):
It suffices to show that $V_n(a,b,c)$ is isomorphic to $V_n(a^{-1},b,c)$, $V_n(a,b^{-1},c)$, $V_n(a,b,c^{-1})$. Observe that $U(Q;X,Y,Z)$ and $T(Q;X,Y,Z)$ are invariant when $Z$ is replaced by $Z^{-1}$.
Therefore, by Lemma~\ref{lem:V4bases} with $(\e,g)=(1,1)$ and $(\e,g)=(-1,1)$ we see that $V_n(a,b,c)$ is isomorphic to $V_n(a,b,c^{-1})$ and $V_n(a^{-1},b,c)$, respectively. By Proposition~\ref{lem:24} with $(\e_0,\e_1,g)=(1,-1,1)$ the $\triangle$-module $V_n(a,b,c)$ is isomorphic to $V_n(a,b^{-1},c)$.
\end{proof}

To see that the map is surjective, we assume that $V$ is an $(n+1)$-dimensional irreducible $\triangle$-module and show that there exists $(a,b,c)\in \T$ such that $V$ is isomorphic to $V_n(a,b,c)$. For any $\theta\in \F$ and any $S\in \triangle$ define $V_S(\theta)=\{v\in V~|~ S v=\theta v\}$. The nonzero scalar $b\in \F$ is chosen to satisfy \begin{equation}\label{bound_condition}
V_B(\theta_{-1}(q;b))=0,
\qquad \quad
V_B(\theta_0(q;b))\not=0.
\end{equation}
To see the existence of $b$ one may apply this lemma:

\begin{lem}\label{lem:theta}
For $i$, $j$ in $\Z$,
$\theta_i(Q;X)=\theta_j(Q;X)$ if and only if $Q^{2i}=Q^{2j}$ or $X^2= Q^{2(n-i-j)}$. 
\end{lem}
\begin{proof}
Factor $\theta_i(Q;X)-\theta_j(Q;X)=(Q^{i-j}-Q^{j-i})
(Q^{i+j-n}X-Q^{n-i-j}X^{-1})$.
\end{proof}

\noindent Pick an eigenvalue $\theta$ of $B$ on $V$. Since $\F$ is algebraically closed there exists a nonzero scalar $s\in \F$ such that $\theta=\theta_0(q;s)$. Under the assumption that $q$ is not a root of unity, Lemma~\ref{lem:theta} implies that either $\{\theta_i(q;s)\}_{i\in \Z}$ are pairwise distinct or $\theta_i(q;s)=\theta_j(q;s)$ if and only if $i+j$ is equal to a specific integer. In either case there are only finitely many $i\in \Z$ with $V_B(\theta_i(q;s))\not=0$. Therefore there exists $i\in \Z$ with $V_B(\theta_{i-1}(q;s))=0$ and $V_B(\theta_i(q;s))\not=0$; in other words the scalar $b=s q^{2i}$ satisfies (\ref{bound_condition}).
Similarly the nonzero scalar $a\in \F$ is chosen to satisfy
\begin{equation*}
V_A(\theta_{-1}(q;a))=0,
\qquad \quad
V_A(\theta_0(q;a))\not=0.
\end{equation*}
Note that every central element in $\triangle$ acts on $V$ as a scalar.
Since $\F$ is algebraically closed and $q^{2n+2}\not=-1$, there exists a nonzero scalar $c\in \F$ such that $\gamma$ acts on $V$ as $\omega(q;a,b,c)$.

We invoke Lemma~\ref{lem:preiso} to show that $V_n(a,b,c)$ is isomorphic to $V$. Taking the commutator with $B$ on either side of (\ref{a->ABc}), we obtain that
\begin{gather}\label{e:serre}
B^3A-[3]_q B^2AB+[3]_q BAB^2-AB^3=-(q^2-q^{-2})^2(BA-AB). 
\end{gather}
Pick any vector $v\in V_B(\theta_0(q;b))$. Applying $v$ to either side of (\ref{e:serre}),
we find that $(B-\theta_{-1}(q;b))(B-\theta_0(q;b))(B-\theta_1(q;b))$ vanishes at $Av$. By (\ref{bound_condition}) it reduces to $(B-\theta_0(q;b))(B-\theta_1(q;b)) A v=0$.
This shows that $V_B(\theta_0(q;b))$ is $(B-\theta_1(q;b))A$-invariant. Let $v_0$ denote an eigenvector of $(B-\theta_1(q;b))A$ in $V_B(\theta_0(q;b))$. Recurrently define
\begin{gather}
v_i =
(A-\theta_{i-1}(q;a))v_{i-1} \qquad \quad \hbox{for all $i\in \N^*$}.
\label{irr_basis:2nd}
\end{gather}
We now proceed by induction to show that
\begin{equation}
(B-\theta_i(q;b)) v_i \in \sum_{h=0}^{i-1}\F v_h \qquad \quad \hbox{for $i\in \N$}. \label{A:uppertriadiagonal}
\end{equation}
By the choice of $v_0$, (\ref{A:uppertriadiagonal}) holds for $i=0,1$. Suppose $i\geq 2$. Observe that
\begin{gather}\label{e:theta-3}
\theta_i(Q;X)=(Q^2+Q^{-2})\theta_{i-1}(Q;X)-\theta_{i-2}(Q;X).
\end{gather}
To get (\ref{A:uppertriadiagonal}), we
apply $v_{i-2}$ to either side of (\ref{b->ABc}) and simplify the resulting equation by using (\ref{irr_basis:2nd}), (\ref{e:theta-3}) and induction hypothesis. To see that $\{v_i\}^n_{i=0}$ is a basis of $V$, we suppose on the contrary that there exists $0\leq i\leq n-1$ such that $v_{i+1}\in \sum\limits_{h=0}^i \F v_h$. Then $\sum\limits_{h=0}^i \F v_h$ is $A$- and $B$-invariant by (\ref{irr_basis:2nd}) and (\ref{A:uppertriadiagonal}), respectively. This leads to a contradiction to the irreducibility of $V$.

Similarly, choose $w_0$ to be an eigenvector of $(A-\theta_1(q;a))B$ in $V_A(\theta_0(q;a))$ and recurrently define
\begin{gather*}
w_i=
(B-\theta_{i-1}(q;b))w_{i-1} \qquad \quad
\hbox{for all $i\in \N^*$}.
\end{gather*}
Then $\{w_i\}^n_{i=0}$ is a basis of $V$ with respect to which the matrix representing $A$ is upper triangular with $\theta_i(q;a)$ on the $(i,i)$-entry for each $0\leq i\leq n$. Therefore
$K_a(X)$ is the characteristic polynomial of $A$ on $V$. By the Cayley-Hamilton theorem $K_a(A)$ vanishes on $V$. In particular $K_a(A)v_0=0$ and hence
\begin{equation}\label{e:vn}
A v_n=\theta_n(q;a) v_n.
\end{equation}

For each $1\leq i\leq n$ let $\phi_i$ denote the $(i-1,i)$-entry of the matrix representing $B$ with respect to the basis $\{v_i\}^n_{i=0}$ of $V$.
Apply $v_{i-1}$ $(1\leq i\leq n)$ to either side of (\ref{b->ABc}) and simplify the resulting equation by using (\ref{irr_basis:2nd}), (\ref{e:vn}). Comparing the coefficient of $v_i$ on either side we then obtain that $\phi_{i+1}-(q^2+q^{-2})\phi_i+\phi_{i-1}$ is equal to
\begin{align*}
&(q^2+q^{-2})
(\theta_i(q;a)\theta_{i}(q;b)+\theta_{i-1}(q;a)\theta_{i-1}(q;b))
\\
&\qquad
-\;(\theta_i(q;a)+\theta_{i-1}(q;a))
(\theta_i(q;b)+\theta_{i-1}(q;b))
-(q-q^{-1})^2\omega(q;a,b,c)
\end{align*}
for each $1\leq i\leq n$, where $\phi_0$ and $\phi_{n+1}$ are interpreted as $0$.
A direct calculation yields that $\phi_i=\phi_i(q;a,b,c)$ for all $1\leq i\leq n$ satisfy the above recurrence relation. Since 
$q^{4n+4}\not=1$ there are no other scalars $\phi_i$ $(1\leq i\leq n)$ satisfying the recurrence relation. In particular we have
\begin{equation}\label{K3}
(B-\theta_1(q;b))(A-\theta_0(q;a))v_0=\phi_1(q;a,b,c)v_0.
\end{equation}
Applying $v_0$ to either side of (\ref{a->ABc}) and using (\ref{K3}) to simplify the resulting equation, we see that $\alpha v_0=\omega(q;b,c,a) v_0$.
A similar argument shows that $\beta w_0=\omega(q;c,a,b) w_0$. Therefore the central elements $\alpha$, $\beta$ act on $V$ as the scalars $\omega(q;b,c,a)$, $\omega(q;c,a,b)$ respectively.

Combining the above comments, the universal property of $M_\lambda(a,b,c)$ implies that there exists a unique $\triangle$-module homomorphism
$M_\lambda(a,b,c)\to V$ that maps $m_0$ to $v_0$. By Lemma~\ref{lem:preiso} this induces a $\triangle$-module isomorphism $V_n(a,b,c)\to V$. By Theorem~\ref{thm:irr} the triple $(a,b,c)\in \T$, as claimed.
\end{proof}


As a consequence of Lemma~\ref{lem:chara}(ii) and Theorem~\ref{thm:class} we see that

\begin{cor}\label{cor:abc}
Assume that $q$ is not a root of unity.
Let $V$ denote an $(n+1)$-dimensional irreducible $\triangle$-module. Let ${\rm tr}\hspace{0.05mm}A$, ${\rm tr}\hspace{0.05mm}B$, ${\rm tr}\hspace{0.05mm}C$ denote the traces of $A$, $B$, $C$ on $V$ respectively.
Then $V$ is isomorphic to $V_n(a,b,c)$ if and only if $a$, $b$, $c$ are the roots of the quadratic polynomials
\begin{gather*}
[n+1]_qX^2- {\rm tr}\hspace{0.05mm}A\hspace{0.15mm}X +[n+1]_q,\\
[n+1]_qX^2- {\rm tr}\hspace{0.05mm}B\hspace{0.15mm}X +[n+1]_q,\\
[n+1]_qX^2- {\rm tr}\hspace{0.05mm}C\hspace{0.15mm}X +[n+1]_q,
\end{gather*}
respectively.
\end{cor}

\section{Applications}

On the finite-dimensional $\AW$-module constructed by Zhedanov in \cite[Section~2]{hidden_sym}, the elements $K_0$, $K_1$ act like a Leonard pair but in a less mathematically rigorous treatment. Motivated by the $P$- and $Q$-polynomial association schemes \cite[Section~3.5]{AC1:1984}, Terwilliger independently introduced Leonard pairs \cite[Definition~1.1]{lp2001}. The classification of Leonard pairs \cite[Theorem~1.9]{lp2001} can be considered to be a linear algebraic version of Leonard theorem \cite{Leo1982}, which gave a characterization of the $q$-Racah and related polynomials in the Askey scheme \cite[Chapter~3]{Koe2010}.
This led to the name of Leonard pairs. Afterward the notion of Leonard pairs was naturally extended to Leonard triples by Curtin in \cite[Definition~1.2]{cur2007}.

For the rest assume that $q$ is not a root of unity. To study the finite-dimensional irreducible $\triangle$-modules, it is now enough to consider the $\triangle$-module $V_n(a,b,c)$ for $(a,b,c)\in \T$ by Theorem~\ref{thm:class}.
In Section~\ref{s:lp} we give the equivalent conditions for $A$, $B$, $C$ on $V_n(a,b,c)$ as Leonard pairs or a Leonard triple. In Section~\ref{s:unitary} we discuss the sufficient conditions for $V_n(a,b,c)$ to be unitary under the constraint of $A$, $B$, $C$ as a Leonard triple. In Section~\ref{s:feedback} we apply Theorem~\ref{thm:class} to classify the finite-dimensional irreducible $\V$-modules and compare the result with \cite[Theorem~4]{hp2001}. In \cite{lp&equ2011} Alnajjar described how to obtain the Leonard pairs of $q$-Racah and other $q$-types from $\U$-modules. Improving the result Terwilliger gave an $\F$-algebra homomorphism $\triangle\to \U$ below \cite[Proposition~1.1]{uaw&equit2011}. 
The purpose of Section~\ref{s:U} is to determine how many $\U$-modules on $V_n(a,b,c)$ give the $\triangle$-module $V_n(a,b,c)$ by pulling back via the homomorphism.
The work \cite{gz92} of Granovski{\u\i} and Zhedanov showed how the Racah coefficients of $\mathfrak{su}_q(2)$ are relevant to the $\AW$-modules. The idea was recently extended to $\mathfrak{sl}_{-1}(2)$ and $\mathfrak{osp}_q(1|2)$ in \cite{gvz2013,gvz2015} respectively. Inspired by the performances, the end of Section~\ref{s:Racah} is to display an $\F$-algebra homomorphism $\triangle\to \U\otimes\U\otimes\U$ and explain its connection to the Racah coefficients of $\U$.

\subsection{Leonard pairs and Leonard triples}\label{s:lp}

A tridiagonal matrix is said to be {\it irreducible} if each entry on the subdiagonal and superdiagonal is nonzero. Let $V$ denote a nonzero finite-dimensional vector space.
A pair (resp.\! triple) of linear transformations on $V$ is called a {\it Leonard pair} (resp.\! {\it triple}) whenever for each of the two (resp.\! three) transformations, there exists a basis of $V$ with respect to which the matrix representing that transformation is diagonal and the matrices representing the other transformations are irreducible tridiagonal.

\begin{lem}\label{lem:diag}
For $(a,b,c)\in \T$ the following are equivalent:
\begin{enumerate}
\item $A$ {\rm(}resp. $B${\rm) } {\rm(}resp. $C${\rm) } is diagonalizable on $V_n(a,b,c)$.

\item $\theta_i(q;a)$
    {\rm(}resp. $\theta_i(q;b)${\rm) }
    {\rm(}resp. $\theta_i(q;c)${\rm) } for all $0\leq i\leq n$  are pairwise distinct.

\item $a^2$
{\rm(}resp. $b^2${\rm)}
{\rm(}resp. $c^2${\rm)}
is not among $q^{2n-2},q^{2n-4},\ldots,q^{2-2n}$.
\end{enumerate}
\end{lem}
\begin{proof}
(i) $\Leftrightarrow$ (ii) follows from Lemma~\ref{lem:eigen1}.
(ii) $\Leftrightarrow$ (iii) follows from Lemma~\ref{lem:theta}.
\end{proof}

\begin{thm}\label{thm:lp}
For $(a,b,c)\in \T$ the following are equivalent:
\begin{enumerate}
\item $A$, $B$
{\rm(}resp. $A$, $C${\rm) }
{\rm(}resp. $B$, $C${\rm) }
act on $V_n(a,b,c)$ as a Leonard pair.

\item $A$, $B$
{\rm(}resp. $A$, $C${\rm) }
{\rm(}resp. $B$, $C${\rm) } are diagonalizable on $V_n(a,b,c)$.

\item Neither of $a^2$, $b^2$
{\rm(}resp. $a^2$, $c^2${\rm) }
{\rm(}resp. $b^2$, $c^2${\rm) }
is among $q^{2n-2},q^{2n-4},\ldots,q^{2-2n}$.
\end{enumerate}
\end{thm}
\begin{proof}
(ii) $\Leftrightarrow$ (iii) follows from Lemma~\ref{lem:diag}.
By Proposition~\ref{lem:24} the matrices representing $B$, $C$ with respect to the basis $\{v_i^{(1,1,\sigma\tau)}\}^n_{i=0}$ of $V_n(a,b,c)$ are $L(q;b)$, $U(q;b,c,a)$ respectively. By \cite[Lemma~7.3]{LTqracah2012} and \cite[Theorem~7.2]{ter2006}, (i) $\Leftrightarrow$ (iii) holds for the pair $B$, $C$. By similar arguments (i) $\Leftrightarrow$ (iii) holds for the pairs $A$, $B$ and $A$, $C$.
\end{proof}


\noindent As a consequence of \cite[Theorem~14.5]{LTqracah2012} and Theorem~\ref{thm:lp} we have

\begin{thm}\label{thm:lt}
For $(a,b,c)\in \T$ the following are equivalent:
\begin{enumerate}
\item $A$, $B$, $C$ act on $V_n(a,b,c)$ as a Leonard triple.

\item Any two of $A$, $B$, $C$ act on $V_n(a,b,c)$ as a Leonard pair.

\item $A$, $B$, $C$ are diagonalizable on $V_n(a,b,c)$.

\item None of $a^2$, $b^2$, $c^2$ is among $q^{2n-2}, q^{2n-4},\ldots,q^{2-2n}$.
\end{enumerate}
\end{thm}

\subsection{The unitary structure on $V_n(a,b,c)$}\label{s:unitary}

In this section we assume that there is an involution $*:\F\to \F$. Recall that given an $\F$-vector space $V$ a map $(\,,):V\times V\to \F$ is called a {\it $*$-form} if
\begin{align*}
\begin{array}{ll}
(u+v,w) = (u,w)+(v,w),
\qquad \quad
&(u,v+w) = (u,v)+(u,w),
\\
(\mu u,v) =\mu(u,v),
\qquad \quad
&(u,\mu v) =\mu^*(u,v)
\end{array}
\end{align*}
for all $u,v,w\in V$ and $\mu\in \F$. A $*$-form $(\,,):V\times V\to \F$ is said to be {\it degenerate} if there exists a nonzero vector $u\in V$ such that $(u,v)=0$ for all $v\in V$ or $(v,u)=0$ for all $v\in V$. For example
the Hermitian form is a nondegenerate $*$-form over the complex number field with $*$ as complex conjugation.
Given an $\F$-algebra $\mathcal A$, a {\it $*$-involution} $\dag$ on $\mathcal A$ means a map $\dag:\mathcal A\to \mathcal A$ satisfying
\begin{align*}
\begin{array}{lll}
(R+S)^\dag=R^\dag+S^\dag,
\qquad \quad
&(RS)^\dag=S^\dag R^\dag,
\qquad \quad
1^\dag=1,
\\
(\mu R)^\dag=\mu^* R^\dag,
\qquad \quad
&(R^\dag)^\dag=R
\end{array}
\end{align*}
for all $R$, $S\in \mathcal A$ and all $\mu\in \F$. Given an $\F$-algebra $\mathcal A$ endowed with a $*$-involution $\dag$, an $\mathcal A$-module $V$ is said to be {\it $\dag$-unitary} or {\it unitary} \cite[Definition~2.3.1]{rosen1992}
if there exists a nondegenerate $*$-form $(\,,)$ with
\begin{gather*}
(S u,v)=(u, S^\dag v)
\qquad \quad
\hbox{for all $u,v\in V$ and $S\in \mathcal A$}.
\end{gather*}

Under some additional hypotheses on $q$, $a$, $b$, $c$ we have a unitary structure on the irreducible $\triangle$-module $V_n(a,b,c)$ with two of $A$, $B$, $C$ as a Leonard pair. Without loss of generality say the action of $A$, $B$ on $V_n(a,b,c)$ as a Leonard pair. Under the assumption $q^*=q^{-1}$, Lemma~\ref{lem:auto} with $g=\tau$ implies that there is a unique $*$-involution $\dag:\triangle\to \triangle$ that maps
$$
(A,B,C,\alpha,\beta,\gamma)
\quad \mapsto \quad
(B, A, C^\vee,\beta,\alpha,\gamma).
$$
For each $0\leq i\leq n$, let $u_i$ and $v_i$ denote the eigenvectors of $A$, $B$ on $V_n(a,b,c)$ with respect to $\theta_i(q;a)$ and $\theta_i(q;b)$ respectively. By Theorem~\ref{thm:lp}, $\{u_i\}_{i=0}^n$ and $\{v_i\}_{i=0}^n$ are two bases of $V_n(a,b,c)$.
If $a^*=b^{-1}$ and $c^*\in \{c,c^{-1}\}$ then it is routine to check that $V_n(a,b,c)$ is $\dag$-unitary with respect to the nondegenerate $*$-form $(\,,)$ defined by
$$
(u_i,v_j)=\left\{
\begin{array}{ll}
1 \qquad \quad &\hbox{if $i=j$},\\
0 \qquad \quad &\hbox{else}.
\end{array}
\right.
$$

By virtue of Theorem~\ref{thm:class} we can conclude that the irreducible $\triangle$-module $V_n(a,b,c)$ with $A$, $B$, $C$ as a Leonard triple admits a unitary structure, provided that $q^*=q^{-1}$ and one of the following holds:
\begin{enumerate}
\item[$\bullet$] $a^*\in\{b,b^{-1}\}$ and $c^*\in \{c,c^{-1}\}$.

\item[$\bullet$] $b^*\in \{c,c^{-1}\}$ and $a^*\in\{a,a^{-1}\}$.

\item[$\bullet$] $c^*\in \{a,a^{-1}\}$ and $b^*\in\{b,b^{-1}\}$.
\end{enumerate}

\subsection{A feedback to $\V$-modules}\label{s:feedback}

Recall from Introduction that $\V$ is a long-studied example of $\AW$ with the defining relations 
\begin{eqnarray*}
q K_1 K_2-q^{-1} K_2 K_1&=& K_0,\\
q K_2 K_0-q^{-1} K_0 K_2&=& K_1,\\
q K_0 K_1-q^{-1} K_1 K_0&=& K_2.
\end{eqnarray*}
The quantum group $\V$ is not the Drinfeld-Jimbo type but plays the crucial roles in the nuclear
spectroscopy \cite{hkp1999},
the quantum Laplace operator \cite{ik2001,nuw:1996},
the $(2 + 1)$-dimensional quantum gravity
\cite{nrz:1990,nr:1993} and so on. In this section we restrict Theorem~\ref{thm:class} to the case of $\V$ and compare the consequence with the extant classification \cite[Theorem~4]{hp2001} of the finite-dimensional irreducible $\V$-modules.

By the universal property of $\triangle$ there is a unique $\F$-algebra homomorphism $\triangle \to \V$ that sends
\begin{eqnarray*}
(A,B,C,\alpha,\beta,\gamma)
&\mapsto&
\left(
(q^{-2}-q^2)K_0,
(q^{-2}-q^2)K_1,
(q^{-2}-q^2)K_2,
0,0,0
\right).
\end{eqnarray*}
Assume that $V$ is an $(n+1)$-dimensional irreducible $\V$-modules. The pullback of $V$ via the above homomorphism gives an irreducible $\triangle$-module structure on $V$. By Theorem~\ref{thm:class} there exists a unique $[a,b,c]\in \T/\{\pm 1\}^3$ such that $V$ is isomorphic to $V_n(a,b,c)$ as a $\triangle$-module. Solving for $a$, $b$, $c$ with the vanishment of $\alpha$, $\beta$, $\gamma$ on $V_n(a,b,c)$ it yields that two possible cases:

\begin{enumerate}
\item[$\bullet$] $a^2=b^2=c^2=-1$.

\item[$\bullet$] $a+a^{-1}=-\e_0(q^{n+1}+q^{-n-1})$, $b+b^{-1}=-\e_1(q^{n+1}+q^{-n-1})$, $c+c^{-1}=-\e_2(q^{n+1}+q^{-n-1})$ where $\e_0,\e_1,\e_2\in\{\pm 1\}$ with $\e_i=\e_{i-1}\e_{i+1}$ for all $i\in \Z/3\Z$.
\end{enumerate}
In terminology of \cite[Theorem~4]{hp2001}, Corollary~\ref{cor:abc} implies that the two cases correspond to the classical and nonclassical irreducible $\V$-modules 
respectively. By Theorem~\ref{thm:lt} the action of $K_0$, $K_1$, $K_2$ on each finite-dimensional irreducible $\V$-module forms a Leonard triple. By the result of Section~\ref{s:unitary} the finite-dimensional irreducible $\V$-modules are unitary when $q^*=q^{-1}$.

\subsection{A connection to $\U$-modules}\label{s:U}

The quantum group $\U$ is an associative unital $\F$-algebra generated by $e$, $f$, $k^{\pm 1}$ subject to the relations
\begin{gather*}
kk^{-1}=k^{-1}k=1, \\
ke=q^2ek, \qquad \quad kf=q^{-2}fk, \\ ef-fe=\frac{k-k^{-1}}{q-q^{-1}}.
\end{gather*}
We review a classification of the finite-dimensional irreducible $\U$-modules from \cite[Theorem~2.6]{jantzen}.
For each $\e\in\{\pm 1\}$ there exists an $(n+1)$-dimensional irreducible $\U$-module $V_{n,\e}$ that has a basis with respect to which the matrices representing $e$, $f$, $k$ are superdiagonal, subdiagonal, diagonal respectively with
\begin{eqnarray*}
e_{i-1,i} &=& \e [n-i+1]_q \qquad \quad (1\leq i\leq n),\\
f_{i,i-1} &=& [i]_q \qquad \quad (1\leq i\leq n),\\
k_{ii} &=& \e q^{n-2i} \qquad \quad (0\leq i\leq n).
\end{eqnarray*}
Every $(n+1)$-dimensional irreducible $\U$-module is shown to be isomorphic to $V_{n,1}$ or $V_{n,-1}$. If ${\rm char}\,\F=2$ the $\U$-modules $V_{n,1}$ and $V_{n,-1}$ are isomorphic. In what follows, the parameter $\e$ will be called the {\it type} of $V_{n,\e}$ and the above basis of $V_{n,\e}$ will be said to be {\it canonical}.
Observe that the Casimir element
$$
\Lambda=ef+\frac{q^{-1}k+q k^{-1}}{(q-q^{-1})^2}
$$
acts on $V_{n,\e}$ as the scalar
$$
\e\frac{q^{n+1}+q^{-n-1}}{(q-q^{-1})^2}.
$$
It follows that

\begin{lem}\label{lem:Casimir}
Let $V$, $W$ denote two finite-dimensional irreducible $\U$-modules. If $\Lambda$ acts on $V$ and $W$ as the same scalar then $V$ and $W$ are isomorphic.
\end{lem}

The elements
\begin{equation*}
x=k^{-1}-q^{-1}(q-q^{-1})ek^{-1}, \qquad \quad
y^{\pm 1}=k^{\pm 1},\qquad \quad
z=k^{-1}+(q-q^{-1})f
\end{equation*}
form a set of generators of $\U$, which are called the {\it equitable generators} of $\U$ \cite[Definition~2.2]{equit2005}. 
The matrices representing $x$, $y$, $z$ with respect to the canonical basis of $V_{n,\e}$ are upper bidiagonal, diagonal, lower bidiagonal respectively with
\begin{align*}
x_{ii} &= \e q^{2i-n} \qquad \quad (0\leq i\leq n),\qquad \quad
x_{i-1,i}= q^{2i-n-1}(q^{i-n-1}-q^{n-i+1})
\qquad \quad (1\leq i\leq n),\\
y_{ii} &= \e q^{n-2i} \qquad \quad (0\leq i\leq n),\\
z_{ii} &= \e q^{2i-n} \qquad \quad (0\leq i\leq n),\qquad \quad
z_{i,i-1}= q^{i}-q^{-i} \qquad \quad (1\leq i\leq n).
\end{align*}
 Each of $x$, $y$, $z$ is diagonalizable on $V_{n,\e}$ with pairwise distinct eigenvalues $\e q^{n-2i}$ for all $0\leq i\leq n$. By \cite[Theorem~7.5]{equit2005} or applying Lemma~\ref{lem:Casimir} we see that

\begin{lem}\label{lem:xyz_cyclic}
For each finite-dimensional irreducible $\U$-module $V$, there exists an
invertible linear transformation $L$ on $V$ such that
\begin{equation*}
L^{-1} x L=y,\qquad \quad
L^{-1} y L=z, \qquad \quad
L^{-1} z L=x.
\end{equation*}
\end{lem}

We are ready to prove the main result of this section.

\begin{thm}\label{thm:U}
For each $(a,b,c)\in \T$ there are exactly
\begin{eqnarray*}
h=\left\{
\begin{array}{ll}
2 \qquad  &\hbox{if ${\rm char}\,\F\not=2$ and $a^2=b^2=c^2=-1$},\\
1 \qquad  &\hbox{otherwise}
\end{array}
\right.
\end{eqnarray*}
distinct $\U$-modules on $V_n(a,b,c)$ satisfying
\begin{eqnarray}
A &=& ax + a^{-1}y+ bc^{-1}\frac{xy-yx}{q-q^{-1}}, \label{UA}\\
B &=& by + b^{-1}z+ ca^{-1}\frac{yz-zy}{q-q^{-1}},\label{UB}\\
C &=& cz +c^{-1}x+ ab^{-1}\frac{zx-xz}{q-q^{-1}}. \label{UC}
\end{eqnarray}
Moreover these $\U$-modules are irreducible, one of which is of type $1$ and if $h=2$ then the other one is of type $-1$.
\end{thm}
\begin{proof}
The irreducibility of $V_n(a,b,c)$ forces that the $\U$-modules on $V_n(a,b,c)$ with (\ref{UA})--(\ref{UC}) are irreducible. We first show that there exists a unique irreducible $\U$-module on $V_n(a,b,c)$ of type $1$ satisfying (\ref{UA})--(\ref{UC}).

(Existence):
Let
$$
c_i=
\prod^{i}_{h=1}(q^{h}-q^{-h})(b^{-1}-q^{n-2h+1}a^{-1}c) \qquad \quad (0\leq i\leq n).
$$
Since $q$ is not a root of unity and $(a,b,c)\in \T$ the scalars $c_i$ are nonzero for all $0\leq i\leq n$. Define an $\U$-module $V$ on $V_n(a,b,c)$ such that the action of $x$, $y$, $z$ on $\{c_i^{-1}v_i^{(1,-1,\tau)}\}^n_{i=0}$ is the same as that on the canonical basis of $V_{n,1}$.
A direct calculation yields that $V$ satisfies (\ref{UA})--(\ref{UC}).
This shows the existence.

(Uniqueness):
Suppose that $V$ is any irreducible $\U$-module on $V_n(a,b,c)$ of type $1$ with (\ref{UA})--(\ref{UC}). To see the uniqueness, it is enough to show that
the $q^{n-2i}$-eigenspaces of $x$, $y$, $z$ on $V$ for $0\leq i\leq n$ are only determined by $V_n(a,b,c)$. Let $\{v_i\}^n_{i=0}$ denote the canonical basis of $V$. Then $\F v_i$ ($0\leq i\leq n$) is the $q^{n-2i}$-eigenspace of $y$ on $V$. The action of $A$, $B$, $C$ on $\{c_i v_i\}^n_{i=0}$ is the same as that on $\{v_i^{(1,-1,\tau)}\}^n_{i=0}$.
By Proposition~\ref{lem:24}, $\F v_i=\F v_i^{(1,-1,\tau)}$ for each $0\leq i\leq n$. Let $L$ denote the linear transformation on $V$ from Lemma~\ref{lem:xyz_cyclic}. Then $\F L v_i$ and $\F L^{-1} v_i$ ($0\leq i\leq n$) are the $q^{n-2i}$-eigenspaces of $x$, $z$ on $V$ respectively. By symmetry $\F L v_i=\F v_i^{(-1,1,\sigma)}$ and $\F L^{-1} v_i=\F v_i^{(-1,-1,\sigma\tau\sigma)}$ for each $0\leq i\leq n$. The uniqueness follows.

Now assume that ${\rm char}\,\F\not=2$.
On any $\U$-modules on $V_n(a,b,c)$ of type $-1$  with (\ref{UA})--(\ref{UC}), the traces of $A$, $B$, $C$ are equal to $-[n+1]_q(a+a^{-1})$, $-[n+1]_q(b+b^{-1})$, $-[n+1]_q(c+c^{-1})$ respectively. By Lemma~\ref{lem:chara}(ii) these $\U$-modules exist only if $a^2=b^2=c^2=-1$. Conversely, if $a^2=b^2=c^2=-1$ then a similar argument shows that there exists a unique $\U$-module of type $-1$ on $V_n(a,b,c)$ with (\ref{UA})--(\ref{UC}).
\end{proof}

\subsection{An application to the Racah coefficients of $\U$}
\label{s:Racah}

Recall that $\U$ has a coalgebra structure with comultiplication $\Delta:\U\to \U\otimes \U$ given by
\begin{eqnarray*}
\Delta(e) &=&e\otimes 1+k\otimes e,\\
\Delta(f) &=&f\otimes k^{-1}+1\otimes f,\\
\Delta(k) &=&k\otimes k.
\end{eqnarray*}
Denote by $\Delta_1=\Delta$ and recurrently define
$$
\Delta_{n+1}=(\Delta \otimes 1\otimes 1\otimes \cdots \otimes 1)\circ \Delta_{n}
\qquad \quad
\hbox{($1$ appearing $n$ times)}
$$
for all $n\in \N^*$. Therefore any $\U^{\otimes n}$-module ($n\geq 2$) can be treated as a $\U$-module by pulling back via $\Delta_{n-1}$. %

Roughly speaking, the Racah coefficients of $\U$ are used to describe the change of two natural bases of the $\U$-module $U\otimes V\otimes W$ for any finite-dimensional $\U$-modules $U$, $V$, $W$. By \cite[Theorem~2.9]{jantzen}
the finite-dimensional $\U$-modules are completely reducible provided that ${\rm char}\,\F\not=2$. For any $n\in \N$ and $\e\in \{\pm 1\}$ the irreducible $\U$-module $V_{n,\e}$ is isomorphic to $V_{0,\e}\otimes V_{n,1}$ and $V_{n,1}\otimes V_{0,\e}$. Thus it is enough to work on the $\U$-modules $V_{n,1}$ in general.
Henceforth we denote by $V_n=V_{n,1}$ for simplicity.
The Clebsch-Gordan formula \cite[Section~5A.8]{jantzen} 
decomposes $V_m\otimes V_n$ $(m,n\in \N)$ into
$$
\bigoplus_{i=0}^{\min\{m,n\}}V_{m+n-2i}.
$$
As an application of the Clebsch-Gordan formula we obtain two bases of the $\U$-module
$$
V_m\otimes V_n\otimes V_p
\qquad \quad
(m,n,p\in \N).
$$
The first one comprises the canonical bases of the irreducible components of $V_{m+n-2i}\otimes V_p$ for all $0\leq i\leq \min\{m,n\}$. The second one is obtained by the same procedure beginning with the decomposition of $V_n\otimes V_p$. Denote by $\{u_i\}_{i=0}^N$, $\{v_i\}_{i=0}^N$ the two bases of $V_m\otimes V_n\otimes V_p$ respectively, where $N=(m+1)(n+1)(p+1)-1$. The {\it Racah coefficients} of $\U$ are defined to be the entries of the transition matrix from $\{u_i\}_{i=0}^N$ to $\{v_i\}_{i=0}^N$. The so-called Racah problem for $\U$ is to find an explicit expression for Racah coefficients of $\U$.


Inspired by the works \cite{gz92,gvz2013,gvz2015} on the Racah coefficients of $\mathfrak{sl}_{-1}(2)$, $\mathfrak{osp}_q(1|2)$ and $\mathfrak{su}_q(2)$, one may predict that

\begin{thm}\label{thm:sharp}
There exists a unique $\F$-algebra homomorphism $\triangle \to \U\otimes\U\otimes\U$ that sends
\begin{align*}
&\frac{A}
{(q-q^{-1})^2}
\quad \mapsto \quad
\Delta(\Lambda)\otimes 1,
\\
&\frac{B}
{(q-q^{-1})^2}
\quad \mapsto \quad
1\otimes \Delta(\Lambda),
\\
&\frac{\gamma}
{(q-q^{-1})^4}
\quad \mapsto \quad
\Lambda\otimes 1\otimes \Lambda
+(1\otimes \Lambda\otimes 1)
\cdot
\Delta_2(\Lambda).
\end{align*}
\end{thm}

\noindent
Pulling back via the homomorphism shown in Theorem~\ref{thm:sharp} the $\U\otimes\U\otimes\U$-module $V_m\otimes V_n\otimes V_p$ admits a $\triangle$-module structure. By Lemma~\ref{lem:Casimir} the bases $\{u_i\}_{i=0}^N$, $\{v_i\}_{i=0}^N$ are the eigenbases of $A$, $B$ on $V_m\otimes V_n\otimes V_p$ respectively. Applying the relations (\ref{a->ABc}), (\ref{b->ABc}) it follows that $A$, $B$ act on $\{v_i\}_{i=0}^N$, $\{u_i\}_{i=0}^N$ in tridiagonal fashions respectively. Moreover the $\triangle$-module $V_m\otimes V_n\otimes V_p$ is completely reducible and $A$, $B$ act on each irreducible component as a Leonard pair. All of the details will be covered in a future paper.

As a result, the Racah problem for $\U$ can be expanded to determine the transition matrices between the two eigenbases of an arbitrary Leonard pair. A solution in terms of hypergeometric series and their $q$-analogues can be found in \cite{lp2004}.





\begin{thebibliography}{99}




\bibitem{lp&equ2011}
H. Alnajjar.
\newblock
Leonard pairs associated with the equitable generators of the quantum algebra $\U$.
\newblock{\em
Linear and Multilinear Algebra} {\bf 59} (2011) 1127--1142.





\bibitem{awpoly}
R. Askey, J. Wilson.
\newblock{
Some basic hypergeometric orthogonal polynomials that generalize Jacobi polynomials.
}
\newblock{\em
Memoirs of the American Mathematical Society
} {\bf 54} (1985) 1--55.




\bibitem{AC1:1984}
E. Bannai, T. Ito.
\newblock{\em
Algebraic combinatorics I: Association schemes.}
\newblock{
Benjamin/Cummings, Menlo Park, CA, 1984.}


\bibitem{base2005}
P. Baseilhac.
\newblock
An integrable structure related with tridiagonal algebras.
\newblock{\em
Nuclear Physics B} {\bf 705} (2005) {605--619.}








\bibitem{BDK:1994}
D. Bonatsos, C. Daskaloyannis, K. Kokkotas.
\newblock
Deformed oscillator algebras for two-dimensional quantum superintegrable systems.
\newblock{\em
Physical Review A} {\bf 50} (1994) {3700--3709.}










\bibitem{cm2013}
L. Chekhov, M. Mazzocco.
\newblock
Quantum ordering for quantum geodesic functions of orbifold Riemann surfaces.
{arXiv:1309.3493v1}.





\bibitem{cur2007}
B. Curtin.
\newblock
Modular Leonard triples.
\newblock{\em
Linear Algebra and its Applications} {\bf 424} (2007) {510--539.}




\bibitem{fair1990}
D. Fairlie.
\newblock
Quantum deformations of $\mathfrak{su}(2)$.
\newblock{\em
Journal of Physics A: Mathematical and General} {\bf 23} (1990) {L183--L187.}












\bibitem{gvz2013}
V. Genest, L. Vinet, A. Zhedanov.
\newblock
The Bannai-Ito polynomials as Racah coefficients of the
$\mathfrak{sl}_{-1}(2)$ algebra.
\newblock{\em
Proceedings of the American Mathematical Society}
{\bf 142} (2014) {1545--1560.}














\bibitem{gvz2014}
V. Genest, L. Vinet, A. Zhedanov.
\newblock
The Bannai-Ito algebra and a superintegrable system with
reflections on the two-sphere.
\newblock{\em
Journal of Physics A: Mathematical and Theoretical}
{\bf 47} (2014) {205202, 13 pp.}








\bibitem{gvz2015}
V. Genest, L. Vinet, A. Zhedanov.
\newblock
The quantum superalgebra $\mathfrak{osp}_q(1|2)$ and
a $q$-generalization of the Bannai-Ito polynomials.
\newblock{arXiv:1501.05602v1}.







\bibitem{glz92}
Y. Granovski{\u\i}, I. Lutzenko, A. Zhedanov.
\newblock
Mutual integrability, quadratic algebras, and dynamical symmetry.
\newblock{\em
Annals of Physics}
{\bf 217} (1992) 1--20.




\bibitem{gz92}
Y. Granovski{\u\i}, A. Zhedanov.
\newblock
`Twisted' Clebsch-Gordan coefficients for $\mathfrak{su}_q(2)$.
\newblock{\em
Journal of Physics A: Mathematical and General}
{\bf 25} (1992) L1029--L1032.





\bibitem{gz93}
Y. Granovski{\u\i}, A. Zhedanov.
\newblock
Linear covariance algebra for $\mathfrak{sl}_q(2)$.
\newblock{\em
Journal of Physics A: Mathematical and General}
{\bf 26} (1993) L357--L359.















\bibitem{hkp1999}
M. Havl\'i\v{c}ek, A. Klimyk, S. Po\v{s}ta.
\newblock
Representations of the cyclically symmetric $q$-deformed algebra $\mathfrak{so}_q(3)$.
\newblock{\em
Journal of Mathematical Physics} {\bf 420} (1999) 2135--2161.






\bibitem{hp2001}
M. Havl\'i\v{c}ek, S. Po\v{s}ta.
\newblock
On the classification of irreducible finite-dimensional representations of $\V$ algebra.
\newblock{\em
Journal of Mathematical Physics} {\bf 42} (2001) 472--500.




\bibitem{hp2011}
M. Havl\'i\v{c}ek, S. Po\v{s}ta.
\newblock
Center of quantum algebra $\V$.
\newblock{\em
Journal of Mathematical Physics} {\bf 52} (2011) {043521, 15 pp.}







\bibitem{LTqracah2012}
H. Huang.
\newblock
The classification of Leonard triples of QRacah type.
\newblock{\em
Linear Algebra and its Applications}  {\bf 436} (2012) {1442--1472}.





\bibitem{ik2001}
N. Iorgov, A. Klimyk.
\newblock
The $q$-Laplace operator and $q$-harmonic polynomials on the quantum vector space.
\newblock{\em
Journal of Mathematical Physics} {\bf 42} (2002) {1326--1345}.















\bibitem{Z3&daha1}
T. Ito, P. Terwilliger.
\newblock
Double affine Hecke algebras of rank 1 and the $\Z_3$-symmetric Askey-Wilson relations.
\newblock{\em
SIGMA} {\bf 6} (2010) {065, 9 pp.}






\bibitem{equit2005}
T. Ito, P. Terwilliger, C. Weng.
\newblock
The quantum algebra $\U$ and its equitable presentation.
\newblock{\em
Journal of Algebra}  {\bf 298}  (2006) 284--301.





\bibitem{jantzen}
J. Jantzen.
\newblock
{\em
Lectures on quantum groups.}
\newblock{
Graduate Studies in Mathematics
6. American Mathematical Society, Providence, RI, 1996.
}







\bibitem{Koe2010}
R. Koekoek, P. Lesky, R. Swarttouw.
\newblock{\em
Hypergeometric orthogonal polynomials and their $q$-analogues.}
\newblock{
Springer Monographs in Mathematics.
}
Springer-Verlag, Berlin, 2010.






\bibitem{aw&daha1}
T. Koornwinder.
\newblock
The relationship between Zhedanov¡¦s algebra {\rm AW(3)} and the double affine Hecke algebra
in the rank one case.
\newblock{\em
 SIGMA} {\bf 3} (2007) {063, 15 pp.}





\bibitem{aw&daha2}
T. Koornwinder.
\newblock
Zhedanov's algebra {\rm AW(3)} and the double affine Hecke algebra in the rank one case. II. The spherical subalgebra
\newblock{\em
 SIGMA} {\bf 4} (2008) {052, 17 pp.}












\bibitem{Leo1982}
D. Leonard.
\newblock
Orthogonal polynomials, duality and association schemes.
\newblock{\em
SIAM Journal on Mathematical Analysis} {\bf 13} (1982) 656--663.




\bibitem{maz2013}
M. Mazzocco.
\newblock
Confluences of the Painlev\'{e} equations, Cherednik algebras and $q$-Askey scheme.
{arXiv:1307.6140v4}.




\bibitem{nr:1993}
J. Nelson, T. Regge.
\newblock
Invariants of $2+1$ gravity.
\newblock{\em
Communications in Mathematical Physics} {\bf 155} (1993) 561--568.




\bibitem{nrz:1990}
J. Nelson, T. Regge, F. Zertuche.
\newblock
Homotopy groups and $(2+1)$-dimensional quantum de Sitter gravity.
\newblock{\em
Nuclear Physics B} {\bf 339} (1990) 516--532.




\bibitem{nuw:1996}
M. Noumi, T. Umeda, M. Wakayama.
\newblock
Dual pairs, spherical harmonics and a Capelli identity in quantum group theory.
\newblock{\em
Compositio Mathematica} {\bf 104} (1996) 227--277.





\bibitem{dqm10}
S. Odake, R. Sasaki.
\newblock
Unified theory of exactly and quasiexactly solvable ``discrete'' quantum mechanics. I. Formalism.
\newblock{\em
Journal of Mathematical Physics} {\bf 51} (2010) {083502, 24 pp.}









\bibitem{dqm11}
S. Odake, R. Sasaki.
\newblock
Discrete quantum mechanics.
\newblock{\em
Journal of Physics A: Mathematical and Theoretical} {\bf 44} (2011) {353001, 47 pp.}



\bibitem{odes1986}
A. Odesskii.
\newblock
An analog of the Sklyanin algebra.
\newblock{\em
Functional Analysis and its Applications} {\bf 20} (1986) {152--154.}






\bibitem{rosen1992}
A. Rosenberg.
\newblock
The unitary irreducible representations of the quantum Heisenberg algebra.
\newblock{\em
Communications in Mathematical Physics} {\bf 144} (1992) {41--51}.














\bibitem{lp2001}
P. Terwilliger.
\newblock
Two linear transformations each tridiagonal with respect to an eigenbasis of the other.
\newblock{\em
Linear Algebra and its Applications} {\bf 330} (2001) 149--203.





\bibitem{lp2004}
P. Terwilliger.
\newblock
Leonard pairs from $24$ points of view.
\newblock{\em
Rocky Mountain Journal of Mathematics} {\bf 32} (2002) 827--888.




\bibitem{ter2006}
P. Terwilliger.
\newblock
An algebraic approach to the Askey scheme of orthogonal polynomials.
In F. Marcell\'{a}n, W. Assche (Eds.)
\newblock{\em
Orthogonal polynomials and special functions: Computation and applications}. Lecture Notes in Mathematics 1883. Springer-Verlag, Berlin, 2006.








\bibitem{uaw2011}
P. Terwilliger.
\newblock
The universal Askey-Wilson algebra.
\newblock{\em
SIGMA}  {\bf 7}  (2011) {069, 24 pp.}





\bibitem{uaw&equit2011}
P. Terwilliger.
\newblock
The universal Askey-Wilson algebra and the equitable presentation of $\U$.
\newblock{\em
SIGMA}  {\bf 7}  (2011) {099, 26 pp.}



\bibitem{lp&awrelation}
P. Terwilliger, R. Vidunas.
\newblock {Leonard pairs and the Askey-Wilson relations}.
\newblock{\em
Journal of Algebra and its Applications}
 {\bf 3} (2004) 411--426.









\bibitem{tvz2012}
S. Tsujimoto, L. Vinet, A. Zhedanov.
\newblock{
Dunkl shift operators and Bannai-Ito polynomials.
}
\newblock{\em
Advances in Mathematics}
{\bf 229} (2012) 2123--2158.



\bibitem{vid07}
R. Vidunas.
\newblock
Normalized Leonard pairs and Askey-Wilson relations.
\newblock{\em
Linear Algebra and its Applications} {\bf 422} (2007) 39--57.


















\bibitem{hidden_sym}
A. Zhedanov.
\newblock
``Hidden symmetry'' of Askey-Wilson polynomials.
\newblock{\em
Teoreticheskaya i Matematicheskaya Fizika} {\bf 89} (1991) 190--204. (English transl.: {\em Theoretical and Mathematical Physics} {\bf 89} (1991) 1146--1157).



\end{thebibliography}
\end{document}